\documentclass[english,11pt,reqno,twoside]{amsart}
\usepackage[T1]{fontenc}
\usepackage[utf8]{inputenc}
\usepackage{amsmath}
\usepackage{amsthm}
\usepackage{amssymb}

\usepackage{soul,color}
\usepackage{t1enc}
\usepackage{a4,indentfirst,latexsym,graphics}

\usepackage{mathrsfs}
\usepackage[noadjust]{cite}
\usepackage{enumitem,graphicx}
\usepackage[colorlinks=true,urlcolor=blue,
citecolor=red,linkcolor=blue,linktocpage,pdfpagelabels,
bookmarksnumbered,bookmarksopen]{hyperref}
\usepackage[english]{babel}

\usepackage[left=2.1cm,right=2.1cm,top=2.72cm,bottom=2.72cm]{geometry}

\usepackage[metapost]{mfpic}

\usepackage[hyperpageref]{backref}
\usepackage[colorinlistoftodos]{todonotes}
\usepackage[normalem]{ulem}
\usepackage{braket}
\usepackage{fancyhdr}

\makeatletter
\providecommand\@dotsep{5}
\def\listtodoname{List of Todos}
\def\listoftodos{\@starttoc{tdo}\listtodoname}
\makeatother

\makeatletter
\def\namedlabel#1#2{\begingroup
    #2%
    \def\@currentlabel{#2}%
    \phantomsection\label{#1}\endgroup
}
\makeatother

\newtheorem{Th}{Theorem}[section]
\newtheorem{Prop}[Th]{Proposition}
\newtheorem{Lem}[Th]{Lemma}

\theoremstyle{definition}
\newtheorem{Def}[Th]{Definition}
\newtheorem{Ex}[Th]{Example}
\theoremstyle{remark}
\newtheorem{Rem}[Th]{Remark}

\newcommand{\R}{\mathbb{R}}

\newcommand{\cA}{\mathcal{A}}
\newcommand{\cB}{\mathcal{B}}

\newcommand{\cC}{\mathcal{C}}

\newcommand{\dV}{\mathrm{d}v_g}

\newcommand{\Vol}{\mathrm{vol}_g}

\DeclareMathOperator*{\essinf}{ess\,inf}
\DeclareMathOperator*{\supp}{supp}

\definecolor{yellow-green}{rgb}{0.6, 0.8, 0.2}

\numberwithin{equation}{section}

\title[Finite-energy solutions to Einstein-scalar field Lichnerowicz equations]{Finite-energy solutions to Einstein-scalar field Lichnerowicz equations on complete Riemannian manifolds}

\author[Bartosz Bieganowski]{Bartosz Bieganowski}
\author[Pietro d'Avenia]{Pietro d'Avenia}
\author[Jacopo Schino]{Jacopo Schino}

\address[B. Bieganowski]{\newline\indent
	Faculty of Mathematics, Informatics and Mechanics, \newline\indent
	University of Warsaw, \newline\indent
	ul. Banacha 2, 02-097 Warsaw, Poland}
\email{\href{mailto:bartoszb@mimuw.edu.pl}{bartoszb@mimuw.edu.pl}}

\address[P. d'Avenia]{\newline\indent
	Dipartimento di Meccanica, Matematica e Management, \newline\indent
	Politecnico di Bari, \newline\indent
	Via E. Orabona 4, 70125 Bari, Italy}
\email{\href{mailto:pietro.davenia@poliba.it}{pietro.davenia@poliba.it}}

\address[J. Schino]{\newline\indent
	Faculty of Mathematics, Informatics and Mechanics, \newline\indent
	University of Warsaw, \newline\indent
	ul. Banacha 2, 02-097 Warsaw, Poland}
\email{\href{mailto:j.schino2@uw.edu.pl}{j.schino2@uw.edu.pl}}

\subjclass[2020]{
58J05, 58J90, 35J75, 35J61, 35Q75, 35Q76.}

\keywords{Elliptic problems with singular nonlinearities, variational methods, Einstein-scalar field equations, Lichnerowicz equation.}

\begin{document}
\begin{abstract}
We consider the singular elliptic problem of the form
\[
-\Delta u + V(x)u = \mathcal B(x)|u|^{2^*-2}u + \frac{\mathcal A(x)}{|u|^{2^*}u},
\qquad u\in H^1(M),
\]
where the coefficients are allowed to have low regularity. Under natural spectral assumptions on \(-\Delta+V\), geometric assumptions on the manifold $M$ ensuring the Sobolev embedding \(H^1(M)\hookrightarrow L^{2^*}(M)\), and a suitable global integrability/smallness condition involving \(\mathcal A\), \(\mathcal B\), and a function \(\psi \in H^1(M)\), we prove the existence of a nonnegative finite-energy supersolution. If, in addition, the Ricci curvature is nonnegative and \(\mathcal B\ge 0\), we obtain a positive finite-energy solution. The proof relies on a family of \(\varepsilon\)-regularized problems, mountain pass arguments, and a limiting procedure in which Harnack’s inequality plays a crucial role in handling the singular term on noncompact manifolds. We also prove a nonexistence result showing that the global integrability condition on \(\mathcal A\) is, in a precise sense, necessary for the existence of nonnegative supersolutions.
\end{abstract}
\maketitle

\section{Introduction}

The Einstein field equations, introduced by A. Einstein (in their original form) in 1915 (see \cite{Einstein1, Einstein2}), describe the relation between the geometry of spacetime and the distribution of matter in this spacetime. Einstein-scalar field theories have seen significant developments in recent years, including attempts to explain the acceleration of the expansion of the universe (see \cite{Rendall1, Rendall2}).

The initial data in the analysis of the Einstein-scalar field equations, whose solutions describe the dynamics of the spacetime, must satisfy certain constraint conditions, which are the Gauss and Codazzi equations.
More precisely, let $(M, \widetilde{g})$ be a Riemannian manifold of dimension $N \geq 3$ without boundary. The Gauss and Codazzi equations read as
\begin{align*}
\mathrm{Scal}_{\widetilde{g}} - |K|_{\widetilde{g}}^2 + (\mathrm{Tr}_{\widetilde{g}} K)^2 - 2 \rho = 0,\\
\nabla_{\widetilde{g}} \cdot K - \nabla_{\widetilde{g}} \mathrm{Tr}_{\widetilde{g}} K - J = 0,
\end{align*}
where $K$ is the second fundamental form, $\mathrm{Scal}_{\widetilde{g}}$ denotes the scalar curvature, $\rho \in \R$, and $J$ is a vector field on $M$. Using the conformal method (see \cite{Lichnerowicz}), i.e., changing the metric $\widetilde{g} = u^{\frac{4}{N-2}} g$, the Gauss and Codazzi equations may be rewritten as the Hamiltonian and momentum constraints (see e.g. \cite{Benalili} for detailed computations). The Hamiltonian constraint is then of the form
\begin{equation}\label{eq:1}
-\Delta_g u + V u = \cB u^{2^*-1} + \frac{\cA}{u^{2^*+1}}, \quad u > 0,
\end{equation}
which is called the \textit{Einstein-scalar field Lichnerowicz equation}, where $\Delta_g$ is the Laplace-Beltrami operator on the manifold $(M,g)$ and $2^* := \frac{2N}{N-2}$.

A good understanding of the behaviour of equation \eqref{eq:1} is crucial, and the existence and qualitative properties of positive solutions to \eqref{eq:1} play a fundamental role.
In the case of a compact Riemannian manifold $(M,g)$, equation  \eqref{eq:1} and its variants have been widely studied and are well understood \cite{Benalili, ChoquetBruhat, Hebey, Isenberg1995, MaWei, Ngo, Premoselli, BK, BDSS,Chrusciel_Gicquaud,PremoselliDCDS}. However, the analysis of \eqref{eq:1} in the case of a noncompact domain $M$ is a challenging problem. One can look for solutions using the sub- and supersolutions method (see e.g. \cite{MaWei}), but in this case, one does not find finite-energy solutions (see Definition \ref{def:solution} below). The noncompact case was studied also in \cite{Albanese}, where asymptotically Euclidean manifolds were considered.

We mention here that some variants of singular-type equations \eqref{eq:1}, equipped with appropriate boundary conditions, were also studied in the case $M = \Omega \subset \R^N$, where $\Omega$ is a bounded domain. We refer the reader to \cite{ArcoyaBoccardo, Arcoya, Boccardo, Durastanti, OlivaPetitta}.

Having this motivation in mind, in this paper, we fix a complete, smooth, $N$-dimensional Riemannian manifold $(M,g)$ without boundary, with $N \geq 3$. Let us underline that $M$ may be noncompact.
We consider the following equation
\begin{equation}\label{eq:main}
-\Delta u + V(x)u = \cB(x) |u|^{2^*-2} u + \frac{\cA(x)}{|u|^{2^*} u}, \quad u \in H^1 (M),
\end{equation}
where $H^1(M)$ denotes the space of functions $u \in L^2(M)$ such that $|\nabla u| \in L^2(M)$ -- cf. Subsection \ref{SS:SobSp}. Here and in the sequel, $\nabla u$ denotes the covariant derivative of $u$, $|\nabla u|^2 = g^{ij} (\nabla u)_i (\nabla u)_j$, and $\Delta$ the Laplace--Beltrami operator.

We impose the following assumptions.
\begin{itemize}
    \item[(\namedlabel{AssA}{A})] $\cA \in L^1_\textup{loc}(M)$, $\cA \geq 0$, and $\cA \not= 0$.
    \item[(\namedlabel{AssB}{B})] $\cB \in L^{2^*}_\textup{loc}(M)$ and $\cB_+ := \max\{\cB,0\} \not = 0$.
    \item[(\namedlabel{AssV}{V})] $V \in L^\infty (M)$ and $\inf \sigma(-\Delta + V) > 0$, where $\sigma$ denotes the spectrum of the operator $-\Delta + V$ on $L^2(M)$.
\end{itemize}
Clearly, (\ref{AssV}) is satisfied if $V \in L^\infty (M)$ and $\essinf V > 0$.

We collect some definitions that will be used throughout the paper.

\begin{Def}\label{def:solution}
If $u\in H^1(M)$ satisfies, for every $\varphi\in C_0^\infty(M)$,
\begin{equation}\label{eq:req}
\int_{M} \left[ |\cB(x)| |u|^{2^*-1} |\varphi| +\frac{\cA(x) |\varphi|}{|u|^{2^* + 1}} \right] \, \dV < +\infty,
\end{equation}
where 
$$
\int_{M} \frac{\cA(x) \varphi}{|u|^{2^*} u}  \,\dV := \int_{\supp\cA} \frac{\cA(x) \varphi}{|u|^{2^*} u}  \,\dV,
$$
then we say that $u$ is a {\em supersolution} of \eqref{eq:main} if and only if, for every $\varphi\in C_0^\infty(M)$ with $\varphi \ge 0$,
\begin{equation}\label{eq:supsol}
\int_{M} \nabla u \nabla \varphi \,\dV
+ \int_{M} V(x)u \varphi \,\dV
\ge
\int_{M}\cB(x) |u|^{2^*-2} u \varphi \,\dV
+ \int_{M} \frac{\cA(x) \varphi}{|u|^{2^*} u}  \,\dV;
\end{equation}
likewise, we say that $u$ is a {\em solution} of \eqref{eq:main} if and only if, for every $\varphi\in C_0^\infty(M)$, equality holds in \eqref{eq:supsol}.

Next, we say that $u \in H^1(M)$ has {\em finite energy} if and only if
\begin{equation*}
\int_{M} \left[ |\cB(x)| |u|^{2^*} + \frac{\cA(x)}{|u|^{2^*}} \right] \, \dV < +\infty.
\end{equation*}

Finally, we say that $u\in H^1(M)$ is {\em positive} if and only if $\essinf_B u > 0$ for every geodesic ball $B \subset M$.
\end{Def}

Note that the finiteness of
\[
\int_{M}  |\cB(x)| |u|^{2^*-1} |\varphi|  \, \dV 
\]
in \eqref{eq:req} is justified because, as recalled in Subsection \ref{SS:SobSp}, the embedding $H^1(M) \hookrightarrow L^{2^*}(M)$ need not hold.

At least formally, finite-energy solutions to \eqref{eq:main} are critical points of the energy functional
\begin{equation}\label{eq:I}
I(u) = \int_{M} |\nabla u|^2 + V(x) u^2 \, \dV -\frac{1}{2^*} \int_{M} \cB(x) |u|^{2^*} \,\dV
+\frac{1}{2^*} \int_{M} \frac{\cA(x)}{|u|^{2^*}} \,\dV.
\end{equation}
Since there are functions in $H^1(M)$ such that 
$$
\int_{M} \frac{\cA(x)}{|u|^{2^*}} \,\dV = +\infty,
$$
following \cite{Hebey}, we introduce an approximated problem and solve it via variational methods.

Throughout the manuscript, $S > 0$ denotes the optimal constant (which depends on $V$) in the embedding $H^1(M) \hookrightarrow L^{2^*}(M)$ whenever it holds, cf. Subsection \ref{SS:SobSp}.
Moreover, for a function $v \colon M \rightarrow \R$, $v_+ := \max\{v, 0\}$ (respectively, $v_- = \max\{ -v, 0\} $) denotes the positive (respectively, negative) part of $v$, and $B(x,R)$ denotes the geodesic ball centered at $x \in M$ of radius $R>0$.

Our first result reads as follows.

\begin{Th}\label{thm:main}
Assume that the Ricci curvature of $(M,g)$ is bounded from below and
\begin{equation}\label{VolumesPositiveNew}
\inf_{x \in M} \Vol (B(x,1)) > 0.
\end{equation}
If (\ref{AssA}), (\ref{AssB}), and (\ref{AssV}) hold and there exist $K>0$, $\Theta >0$, and $\psi \in H^1 (M)$ such that
\begin{equation}\label{eq:B+}
\cB \in L^\infty(\supp \psi) \quad \text{and} \quad \int_{M} \cB(x) |\psi|^{2^*} \, \dV > 0,
\end{equation}
\begin{equation}\label{eq:psiK}
\|\psi\|^{2^*} \int_{M} \frac{\cA(x)}{|\psi|^{2^*}} \, \dV \leq \frac{K}{(|\cB_+|_{L^\infty(\supp \psi)} S)^{N-1}},
\end{equation}
and
\begin{equation}\label{eq:ThetaK}
N \Theta^2 + (N-2) \frac{|\cB_-|_{L^\infty(\supp \psi)}}{|\cB_+|_{L^\infty(\supp \psi)}} \Theta^{2^*} + (N-2) \frac{K}{\Theta^{2^*}} \le 2,
\end{equation}
then \eqref{eq:main} admits a nonnegative finite-energy supersolution $u \in H^1 (M)$. 
If, moreover, the Ricci curvature of $(M,g)$ is nonnegative and $\cB \ge 0$, then $u$ is a positive solution to \eqref{eq:main}.
\end{Th}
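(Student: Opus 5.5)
Following the abstract, we regularize the singular term and then pass to the limit. For $\varepsilon>0$ we consider the functional
\[
I_\varepsilon(u)=\frac12\|u\|^2-\frac1{2^*}\int_M \cB|u|^{2^*}\,\dV+\frac1{2^*}\int_M\frac{\cA}{(\varepsilon+u_+)^{2^*}}\,\dV ,
\]
with the normalization $\|u\|^2=\int_M|\nabla u|^2+Vu^2\,\dV$. By (\ref{AssV}) this is an equivalent norm on $H^1(M)$. The Ricci lower bound together with \eqref{VolumesPositiveNew} gives the embedding $H^1(M)\hookrightarrow L^{2^*}(M)$ with constant $S$ (Subsection \ref{SS:SobSp}). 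Since $(\varepsilon+u_+)^{-2^*}$ is bounded only by $\varepsilon^{-2^*}$, we restrict attention to $u$ for which the last integral is finite. The natural way to do this is to work on the set where it is finite, or to truncate $\cA$ by $\cA\chi_{B(x_0,n)}$ and also let $n\to\infty$.

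The mountain pass geometry comes from the test function $\psi$. Along the ray $t\mapsto t|\psi|$ we have the bound
\[
I_\varepsilon(t|\psi|)\le \frac{t^2}{2}\|\psi\|^2-\frac{t^{2^*}}{2^*}\int_M\cB|\psi|^{2^*}\,\dV+\frac{t^{-2^*}}{2^*}\int_M\frac{\cA}{|\psi|^{2^*}}\,\dV ,
\]
which is uniform in $\varepsilon$. By \eqref{eq:B+}, the right-hand side tends to $-\infty$ as $t\to\infty$. At the other end, we take a small sphere $\|u\|=\rho$ with $\rho=\Theta/(|\cB_+|S)^{(N-2)/4}$, suitably scaled. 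Using \eqref{eq:psiK} and \eqref{eq:ThetaK}, we want to show that the infimum of $I_\varepsilon$ on this sphere exceeds the value of $I_\varepsilon$ at a suitable point $t_0|\psi|$ inside the ball. We also want to show that the mountain pass level $c_\varepsilon$ stays strictly below the compactness threshold $\frac1N (S|\cB_+|_\infty)^{-(N-2)/2}$. This is exactly where the three terms in \eqref{eq:ThetaK} (quadratic, negative part of $\cB$, and the singular part $K\Theta^{-2^*}$) are balanced against $2=2^*\cdot\frac{N-2}{N}$.

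With the geometry in place, we obtain a Cerami/Palais–Smale sequence at level $c_\varepsilon$. It is bounded in $H^1$ by the usual $I-\frac1{2^*}I'$ combination, because the singular term contributes with a favourable sign. We then take a weak limit $u_\varepsilon$. Since $c_\varepsilon$ lies below the threshold, a Brezis–Lieb argument should rule out vanishing of the critical part on $\supp\psi$, and $u_\varepsilon$ is a nontrivial weak solution of the regularized equation. Testing with $u_\varepsilon^-$ shows that $u_\varepsilon\ge0$. Next we collect estimates uniform in $\varepsilon$: the bound $\|u_\varepsilon\|\le C$, and, using the energy identity,
\[
\int_M\cA(\varepsilon+u_\varepsilon)^{-2^*}\,\dV\le C .
\]
Letting $\varepsilon\to0$ gives $u_\varepsilon\rightharpoonup u$ in $H^1$. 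Fatou's lemma then gives finite energy, and in particular $u>0$ a.e. on $\supp\cA$. For the supersolution inequality we use test functions $\varphi\ge0$. The singular term passes to the limit by Fatou, with an inequality in the right direction, and the $\cB$ term passes to the limit by local compactness in $L^p_{\mathrm{loc}}$, $p<2^*$, together with $\cB\in L^{2^*}_{\mathrm{loc}}$ and equi-integrability.

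For the solution statement we assume $\mathrm{Ric}\ge0$ and $\cB\ge0$. Then $u_\varepsilon$ are nonnegative supersolutions of $-\Delta u+\|V\|_\infty u\ge0$. The weak Harnack inequality, with constants uniform because $\mathrm{Ric}\ge0$, gives
\[
\essinf_B u_\varepsilon\ge c_B\,\frac{1}{\Vol(2B)}\int_{2B}u_\varepsilon\,\dV
\]
on every geodesic ball $B$. The averages are bounded below uniformly, since $u_\varepsilon$ cannot concentrate to zero locally because of the bound on the $\cA$ integral and $u\neq0$. Hence $u_\varepsilon\ge c_B>0$ on $B$ uniformly in $\varepsilon$. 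Then $\cA\varphi(\varepsilon+u_\varepsilon)^{-2^*-1}\le c_B^{-2^*-1}\cA|\varphi|$, which is integrable, and dominated convergence yields equality in \eqref{eq:supsol} for all $\varphi\in C_0^\infty(M)$. The limit $u$ is positive in the sense of Definition \ref{def:solution}.

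I expect the main obstacles to be twofold. The first is verifying, from \eqref{eq:psiK}–\eqref{eq:ThetaK}, that the mountain pass level lies below the noncompactness threshold while the sphere estimate still holds. The second, and more serious, is the uniform local lower bound via Harnack on a noncompact manifold. For the latter I would first try to bound the averages $\int_{2B}u_\varepsilon$ from below using the strong convergence $u_\varepsilon\to u$ in $L^1_{\mathrm{loc}}$ and the strong maximum principle for $u$.
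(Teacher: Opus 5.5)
Your overall architecture is the paper's: regularize the singular term and get $u_\varepsilon$ from a bounded Palais--Smale sequence (bounded via $I_\varepsilon-\frac1{2^*}I_\varepsilon'$). Then pass to the limit with Fatou for the supersolution inequality and finite energy. For the solution statement, combine weak Harnack for $-\Delta u_\varepsilon+|V|_{L^\infty}u_\varepsilon\ge0$ with positivity of the limit (strong minimum principle) and local convergence of $\int_{2B}u_\varepsilon^q$; your closing suggestion for this step is exactly what the paper does.

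However, the step you flag as the first main obstacle cannot be carried out. Take $\Phi(t)=\frac12t^2-\frac1{2^*}Sb_+t^{2^*}$ and $t_0=(Sb_+)^{-(N-2)/4}$ as in the paper. Your threshold $\frac1N(Sb_+)^{-(N-2)/2}$ is exactly $\Phi(t_0)=\max_{t\ge0}\Phi(t)$, and the Sobolev inequality gives $I_\varepsilon(u)>\Phi(\|u\|)\ge0$ whenever $\|u\|\le t_0$. So your endpoint (where $I_\varepsilon<0$) lies outside the ball of radius $t_0$ and your starting point lies inside it. Every admissible path therefore meets the sphere $\|u\|=t_0$, and $c_\varepsilon\ge\Phi(t_0)$; the paper in fact records $m_\varepsilon>\Phi(t_0)$. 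The mountain pass level is never below the threshold.

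What \eqref{eq:psiK}--\eqref{eq:ThetaK} actually encode, with $\|\psi\|=1$, is $I_\varepsilon(\Theta t_0\psi)<\Phi(t_0)$. The three terms in \eqref{eq:ThetaK} are $2/\Phi(t_0)$ times the bounds for the quadratic part, the $\cB_-$ part and the singular part of $I_\varepsilon(\Theta t_0\psi)$. So the separating sphere has radius $t_0$ and $\Theta t_0\psi$ is the starting point, which is the reverse of the arrangement in your sketch.

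Fortunately this step is also superfluous, so drop the Brezis--Lieb argument. The weak limit of a bounded Palais--Smale sequence solves the regularized equation at any level: use a.e.\ convergence, Vitali for the $\cB$-term and dominated convergence for the regularized singular term. Nontriviality comes from the singular term itself. Fatou applied to $I_\varepsilon(u_n)-\frac1{2^*}I_\varepsilon'(u_n)(u_n)\le m_\varepsilon+1+\|u_n\|$ gives $\int_M\cA(\varepsilon+u_\varepsilon^2)^{-2^*/2}\,\dV\le C$ uniformly in $\varepsilon$, with no energy identity needed. In the limit this gives $\int_M\cA u^{-2^*}\,\dV<\infty$, hence $u>0$ a.e.\ on $\supp\cA$, which is all the nontriviality required.

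Two technical points also need repair.
\begin{enumerate}
\item The map $s\mapsto(\varepsilon+s_+)^{-2^*}$ is only Lipschitz, since its derivative jumps at $s=0$. Your $I_\varepsilon$ is therefore not differentiable at functions vanishing on a positive-measure part of $\supp\cA$. Also, with $\cB|u|^{2^*-2}u$ left unchanged, testing with $u_\varepsilon^-$ only yields $\|u_\varepsilon^-\|^2=\int_M\cB|u_\varepsilon^-|^{2^*}\,\dV$, which does not force $u_\varepsilon^-=0$. The paper uses the smooth even regularization $(\varepsilon+u^2)^{-2^*/2}$ and replaces $u$ by $u_+$ on the right-hand side.
\item Under (\ref{AssA}) and (\ref{AssB}) the functional is not even defined on $H^1(M)$; one needs (\ref{AssA1}) and (\ref{AssB1}). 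So $\cB$ must be truncated as well, and $\cA$ must be truncated in height as well as in space so that it lies in $L^{2N/(N+2)}(M)$. One must then check that \eqref{eq:B+}--\eqref{eq:ThetaK} persist and that $t_2$ and the bound on $m_{n,\varepsilon}$ are independent of both $\varepsilon$ and $n$ (Remark \ref{rem:t}). After that comes a second limit $n\to\infty$ via Fatou, in which the Harnack argument is repeated for the solution statement.
\end{enumerate}
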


\begin{Rem}\label{rem:T12}We observe the following facts.
\begin{enumerate}[label=(\roman*),ref=\roman*]
    \item For a given $\psi \in H^1(M)$ that verifies \eqref{eq:B+}, condition \eqref{eq:ThetaK} is always satisfied by possibly non-optimal $\Theta$ and $K$; see also Subsection \ref{ss:alt} for a version of \eqref{eq:ThetaK} that does not depend on $\psi$. On the other hand, when $\cB \ge 0$ in $\supp \psi$, for a given $K > 0$, the minimum of the left-hand side of \eqref{eq:ThetaK} is attained at $\Theta = K^\frac{N-2}{4(N-1)}$ and equals $2 (N-1) K^\frac{N-2}{2(N-1)}$. Thus, \eqref{eq:ThetaK} holds if and only if $K \le \bigl( \frac{1}{N-1} \bigr)^\frac{2(N-1)}{N-2}$. Considering \eqref{eq:psiK}, the optimal value for $K$ is $\bigl( \frac{1}{N-1} \bigr)^\frac{2(N-1)}{N-2}$, which gives a wider range than \cite[formula (3.3)]{Hebey}, where, with the notations of this paper, $K = \bigl( \frac{1}{2(N-1)} \bigr)^\frac{N}{N-2} \frac{1}{N-2}$.
    \item The reason why we need the assumption $\cB \ge 0$ to find a solution to \eqref{eq:main} is related to the fact that the $L^\infty$ norm of a solution to a critical equation need not be controlled by its $H^1$ norm. See Remark \ref{rem:Linfinity} for further details.
    \item \label{i4remT12} The integrability of $\cA |\psi|^{-2^*}$, which is a consequence of \eqref{eq:psiK}, is a sort of global condition on $\cA$, which is not ensured by (\ref{AssA}) or its stronger variant (\ref{AssA1}) below. It is also a necessary condition for the existence of nonnegative supersolutions, as stated in Theorem \ref{th:non} below. 
\end{enumerate}
\end{Rem}

Let us discuss conditions \eqref{eq:B+}-\eqref{eq:psiK} in a series of examples. Specifically, Example \ref{ex:Bloc} demonstrates that such conditions may hold without global assumptions about $\cB$. Furthermore, even in the case of a compact manifold $M$, (\ref{AssA}) and (\ref{AssB}) are weaker than what one would typically need in a variational approach, cf. (\ref{AssA1}) and (\ref{AssB1}); in particular, we require far less than $\cA$, $\cB$, and $V$ to be smooth, as done in \cite{Hebey}.

\begin{Ex}\label{ex:RN}
Suppose that $M = \R^N$ equipped with the Euclidean metric. In addition to (\ref{AssA}) and (\ref{AssB}), assume that $\cB \in L^\infty (\R^N)$, $\cB \geq 0$, and $\cA$ decays exponentially at infinity. Then, every positive $\psi \in H^1(M)$ with a polynomial decay at infinity satisfies \eqref{eq:B+} and $\cA |\psi|^{-2^*} \in L^1(M)$. In this case, \eqref{eq:psiK} holds if $|\cB|_{L^\infty}$ is sufficiently small or 
$\cA = \theta \cA_0$ with $\cA_0|\psi|^{-2^*} \in L^1(M)$ fixed and $\theta > 0$ sufficiently small.
\end{Ex}

\begin{Ex}\label{ex:Bloc}
Assume that the Ricci curvature of $(M,g)$ is bounded from below and \eqref{VolumesPositiveNew} holds. Let $B_1, B_2 \subset M$ be two open geodesic balls such that $\overline{B_1} \subset B_2$. In addition to (\ref{AssA}) and (\ref{AssB}), assume that $\cA$ is supported in $\overline{B_1}$, and take $\psi \in H^1(M)$ supported in $\overline {B_2}$ such that $\essinf_{\supp \cA} \psi > 0$. Then, $\cA |\psi|^{-2^*} \in L^1(M)$. If $\cB$ is nonnegative and essentially bounded in $B_2$, then \eqref{eq:B+} holds, and \eqref{eq:psiK} is satisfied if $\cA$ or $\cB$ are sufficiently small in the sense of Example \ref{ex:RN}.
If $\cB$ is essentially bounded but takes both positive and negative values in $B_2$, then $\psi$ can be suitably chosen for \eqref{eq:B+} to hold.
\end{Ex}

\begin{Ex}
If $M$ has infinite measure and $\liminf_{x \to \infty} \cA(x) > 0$ (e.g., if $\cA$ is constant), then, for every $v \in H^1(M)$, $\cA |v|^{-2^*} \not \in L^1(M)$.
\end{Ex}

Let us briefly illustrate our strategy to prove Theorem \ref{thm:main}. Due to the singular term in \eqref{eq:main}, as anticipated earlier, we perturb the corresponding integral in $I$, defined in \eqref{eq:I}, via a parameter $\varepsilon > 0$, obtaining a family of auxiliary problems. At this stage, we work with the stronger assumptions (\ref{AssA1}) and (\ref{AssB1}), which allow a rigorous variational formulation. After obtaining a family of solutions to the auxiliary problems, we pass to the limit as $\varepsilon \to 0^+$; here, to prove that the weak limit is a solution to the original problem, Harnack's inequality (Proposition \ref{Pr:Harnack}) is crucial. We highlight that the possible noncompactness of $M$ prevents us from using the same argument as in \cite{Hebey}, where a uniform positive pointwise lower bound was obtained for such a family of solutions. Finally, we approximate the functions $\cA$ and $\cB$ satisfying (\ref{AssA}) and (\ref{AssB}) with a sequence of functions satisfying (\ref{AssA1}) and (\ref{AssB1}) and pass again to the limit.

We conclude this section with the nonexistence result mentioned in Remark \ref{rem:T12} (\ref{i4remT12}).

\begin{Th}\label{th:non}
Assume that (\ref{AssA}), (\ref{AssB}), and (\ref{AssV}) hold. If
\begin{equation*}
\int_{M} \frac{\cA(x)}{|v|^{2^*}} \, \dV = +\infty \quad \text{for every } v \in H^1(M),
\end{equation*}
then no nonnegative supersolutions $u \in H^1(M)$ to \eqref{eq:main} such that $\cB \ge 0$ in $\supp u$ exist. If, moreover, the Ricci curvature is bounded below and \eqref{VolumesPositiveNew} holds, then no nonnegative supersolutions $u \in H^1(M)$ to \eqref{eq:main} such that $\cB_- \in L^\infty(\supp u)$ exist.
\end{Th}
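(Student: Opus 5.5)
We argue by contradiction. Suppose $u \in H^1(M)$ is a nonnegative supersolution. The goal is to show that $\int_M \cA |u|^{-2^*}\,\dV < +\infty$, which contradicts the hypothesis with $v = u$. First note that \eqref{eq:req} forces $u>0$ a.e. on $\supp\cA$. Indeed, taking $\varphi\ge0$ with $\varphi>0$ on a ball, the integrand $\cA\varphi u^{-2^*-1}$ must be finite a.e., so $\cA = 0$ a.e. where $u=0$.

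The natural idea is to test \eqref{eq:supsol} with $\varphi = u$. This would give
\[
\int_M \frac{\cA}{u^{2^*}}\,\dV \le \int_M |\nabla u|^2 + V u^2\,\dV - \int_M \cB u^{2^*}\,\dV .
\]
When $\cB\ge0$ on $\supp u$, the last term is nonpositive, and the right-hand side is then bounded by a multiple of $\|u\|^2<\infty$. The test function $u$ is not admissible, however, so we must approximate.

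\textbf{Approximation.} Take cutoffs $\eta_R\in C_0^\infty(M)$ with $0\le\eta_R\le1$, $\eta_R = 1$ on $B(x_0,R)$ and $|\nabla\eta_R|\le 2/R$; these exist by completeness. Take also mollified truncations $u_k \in C_0^\infty$ of $\min\{u,k\}\eta_R$ that are nonnegative and converge in $H^1$. First we extend \eqref{eq:supsol} by density to nonnegative $\varphi\in H^1$ with compact support and $\varphi\in L^\infty$. The left side is continuous in $H^1$. On the right, the $\cB$-term can be handled via local integrability: $\cB\in L^{2^*}_{\rm loc}$ and $u\in L^{2^*}_{\rm loc}$, which holds locally by the Sobolev embedding on compact sets. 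For the $\cA$-term we use Fatou's lemma, which preserves the inequality in the correct direction, since this term sits on the smaller side.

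Next we take $\varphi = \min\{u,k\}\eta_R$. On the right, $\cA\min\{u,k\}\eta_R/u^{2^*+1}$ increases as $k\to\infty$ to $\cA\eta_R u^{-2^*}$, so monotone convergence applies. On the left, $\nabla(\min\{u,k\}\eta_R)\to\nabla(u\eta_R)$, and letting $R\to\infty$ the gradient terms converge to $\int|\nabla u|^2+Vu^2$, because the error $\int u|\nabla u||\nabla\eta_R|\le (2/R)\|u\|^2\to0$.

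\textbf{The $\cB$-term.} If $\cB\ge0$ on $\supp u$, this term is $\ge0$ and can be dropped. In the second case, $\cB_-\in L^\infty(\supp u)$, we use $-\int \cB u^{2^*}\varphi\dots\le |\cB_-|_\infty\int u^{2^*}$. This is finite because, under the Ricci lower bound and \eqref{VolumesPositiveNew}, the embedding $H^1(M)\hookrightarrow L^{2^*}(M)$ holds, as recalled in Subsection \ref{SS:SobSp}. Combining the two steps gives $\int_M \cA\eta_R u^{-2^*}\le C$ uniformly in $R$, and monotone convergence yields $\cA u^{-2^*}\in L^1(M)$, which is the contradiction.

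\textbf{Main obstacle.} The delicate step is justifying the test function $\min\{u,k\}\eta_R$: extending \eqref{eq:supsol} from $C_0^\infty$ to bounded compactly supported $H^1$ functions while keeping the singular term under control. The direction of Fatou's lemma works in our favour. Still, the $\cB$-term requires local $L^{2^*}$ control of the approximants, and we must check that the mollified truncations remain nonnegative and converge a.e.
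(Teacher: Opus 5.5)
Your argument is correct and uses the same mechanism as the paper's proof. You test the supersolution inequality with approximations of $u$ itself and apply Fatou's lemma to the singular term, where it works in the favourable direction. You then control the $\cB$-contribution either by dropping it when $\cB \ge 0$ on $\supp u$ or by the bound $|\cB_-|_{L^\infty(\supp u)} |u|_{L^{2^*}}^{2^*}$ coming from the global embedding.

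The paper's version is shorter because it plugs in nonnegative $\varphi_n \in C_0^\infty(M)$ with $\varphi_n \to u$ in $H^1(M)$ directly, using that $H^1(M)$ is defined as the completion of $C_0^\infty(M)$. It passes to the limit in $\int \cB_- u^{2^*-1}\varphi_n$ using the same two cases, so your extension to bounded compactly supported test functions, the truncation $\min\{u,k\}$, and the cutoffs $\eta_R$ are not needed.
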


The paper is structured as follows. In Section \ref{S:prel}, we provide the functional-analytic setting of \eqref{eq:main} and Liouville-type theorems for nonnegative supersolutions. In Section \ref{S:eps}, we prove Theorem \ref{thm:main} under the stronger assumptions (\ref{AssA1}) and (\ref{AssB1}), making make use of the approximating problems mentioned above. In Section \ref{Sec:pfmain}, we prove Theorems \ref{thm:main} and \ref{th:non}.

\section{Preliminaries}\label{S:prel}
In this section, we first introduce the Sobolev space $H^1(M)$ and provide some general and useful properties about it. Then, we present some qualitative aspects of supersolutions to \eqref{eq:main}.

\subsection{Sobolev spaces on manifolds}\label{SS:SobSp}

We introduce the space $H^1 (M)$ as the completion of 
$C_0^\infty(M)$, the space of $C^\infty$ functions with compact support in $M$, with respect to the norm
$$
\|u\|_{H^1} := \left( \int_M |\nabla u|^2 + u^2 \, \dV \right)^{1/2}.
$$
It is a Hilbert space \cite[Proposition 2.2]{HebeyBook} with respect to the scalar product
$$
\langle u, v \rangle_{H^1} := \int_M \nabla u \nabla v + uv \, \dV.
$$
In view of (\ref{AssV}), we can consider the equivalent norm
\begin{equation*}
\|u\| := \left(\int_{M} |\nabla u|^2 + V(x) u^2 \, \dV\right)^{1/2}
\end{equation*}
and the associated scalar product
\begin{equation*}
\langle u, v \rangle = \int_{M} \nabla u \nabla v  + V(x) uv \, \dV.
\end{equation*}

It is known that the embedding $H^1 (M) \hookrightarrow L^{2^*} (M)$ may not hold; see, e.g., \cite[Proposition 3.3]{HebeyBook}. However, there is a characterization of complete manifolds for which such an embedding holds (\cite[Theorem 3.18]{HebeyBook}). Assume that the Ricci curvature is bounded from below. Then, the embedding above holds if and only if \eqref{VolumesPositiveNew} is satisfied. In particular, Sobolev embeddings hold true if the Ricci curvature is bounded from below and the injectivity radius is positive (\cite[Corollary 3.19]{HebeyBook}, \cite[Theorem 2.21]{AubinBook}).
Finally, $S > 0$ denotes the optimal constant such that
$$
|\varphi|_{L^{2^*}}^{2^*} \leq S \|\varphi\|^{2^*} \quad \text{for every } \varphi \in H^1 (M), 
$$
i.e.,
$$
S := \inf_{\varphi \in H^1(M) \setminus \{0\}} \frac{|\varphi|_{L^{2^*}}^{2^*}}{\|\varphi\|^{2^*}}.
$$

\subsection{Liouville-type theorems}

\begin{Prop}[Harnack's inequality]\label{Pr:Harnack}
Let $d \ge 0$ and assume that the Ricci curvature of $(M,g)$ is nonnegative. If $B(R) \subset M$ is a geodesic ball of geodesic radius $R>0$ and $q>0$ is sufficiently small, then there exists $c>0$ such that, for every $u \in H^1(M)$ satisfying
$$
u \ge 0
\text{ and }
-\Delta u + du \ge 0 \quad \text{in } B(R),
$$
there holds
\begin{equation*}
\left( \int_{B(R/8)} u^q \, \dV \right)^{1/q} \le c \inf_{B(R/16)} u.
\end{equation*}
\end{Prop}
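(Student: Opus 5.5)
The plan is to prove this weak Harnack inequality by the classical Moser iteration scheme for nonnegative supersolutions, adapted to the Riemannian setting. Nonnegative Ricci curvature supplies two geometric ingredients on geodesic balls. First, by Bishop--Gromov, the volume doubling property $\Vol(B(x,2r))\le 2^N\Vol(B(x,r))$ holds. Second, by Buser's inequality, a scale-invariant Neumann Poincaré inequality $\int_{B(x,r)}|f-f_B|^2\,\dV\le Cr^2\int_{B(x,2r)}|\nabla f|^2\,\dV$ holds. By Saloff-Coste's theorem, these yield a local Sobolev inequality on balls: for some $\nu>2$ (we may take $\nu=N$),
\[
\Bigl(\frac{1}{\Vol(B)}\int_B|f|^{\frac{2\nu}{\nu-2}}\,\dV\Bigr)^{\frac{\nu-2}{\nu}}\le \frac{C r^2}{\Vol(B)}\int_B\bigl(|\nabla f|^2+r^{-2}f^2\bigr)\,\dV
\]
for $f$ supported in $B=B(x,r)$. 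Since $B(R)$ is fixed, all constants may depend on $R$, $d$, $N$, and $\Vol(B(R/16))$, which is exactly what the statement allows ($c$ independent of $u$ only). The "sufficiently small" $q$ is the exponent produced by the John--Nirenberg step below, which is why $q$ is not arbitrary in $(0,\frac{N}{N-2})$.

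First I replace $u$ by $u_\delta:=u+\delta$ with $\delta>0$, so that negative powers make sense. Since $d\ge0$ and $\delta\ge 0$, we get $-\Delta u_\delta+du_\delta\ge d\delta\ge0$ weakly, so $u_\delta$ is still a supersolution; I let $\delta\to0$ at the end. Testing the inequality with $\eta^2u_\delta^{\beta}$ for $\beta<0$, where $\eta$ is a cutoff between concentric balls $B(r_1)\subset B(r_2)$ with $|\nabla\eta|\le 2/(r_2-r_1)$, gives a Caccioppoli estimate for $w=u_\delta^{(\beta+1)/2}$. The zero-order term contributes at most $d\int\eta^2 u_\delta^{\beta+1}$, which is absorbed into the $r^{-2}$ term at the cost of a constant depending on $dR^2$. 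Combining this with the local Sobolev inequality and iterating over $\beta+1=-p\kappa^k$ with $\kappa=\nu/(\nu-2)$ gives
\[
\sup_{B(R/16)}u_\delta^{-1}\le C\Bigl(\frac{1}{\Vol(B(R/8))}\int_{B(R/8)}u_\delta^{-p}\Bigr)^{1/p}.
\]
The analogous iteration with $0<\beta+1<1$ moves positive exponents upward, from any small $q'$ to any $q<\kappa$, giving $\|u_\delta\|_{L^{q}(B(R/8))}\lesssim\|u_\delta\|_{L^{q'}(B(R/4))}$.

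The key step, which I expect to be the main obstacle, is the crossover: linking small positive and small negative powers, namely
\[
\int_{B(R/4)}u_\delta^{p}\,\dV\cdot\int_{B(R/4)}u_\delta^{-p}\,\dV\le C\,\Vol(B(R/4))^2
\]
for some small $p>0$. For this I test with $\eta^2u_\delta^{-1}$, which shows that $\log u_\delta$ has gradient controlled in $L^2$ on every ball inside $B(R/2)$: $\int_{B(x,r)}|\nabla\log u_\delta|^2\le C\Vol(B(x,r))r^{-2}$, with the $d$ term giving only an extra bounded constant since $r\le R$. Poincaré then shows that $\log u_\delta\in BMO$ on $B(R/2)$. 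The John--Nirenberg inequality, valid on doubling spaces supporting Poincaré, gives $\int e^{p|\log u_\delta-c|}\le C\Vol$, and this is the crossover estimate. If the boundary bookkeeping with nested balls becomes delicate, I would instead use the Bombieri--Giusti abstract lemma, which only needs the two reverse-Hölder iterations together with the weak-type level-set estimate for $\log u_\delta$.

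Chaining the three estimates gives $(\int_{B(R/8)}u_\delta^q)^{1/q}\le c\inf_{B(R/16)}u_\delta$ with $c$ independent of $\delta$ and $u$. Letting $\delta\to0$ by monotone convergence on the left and $\inf u_\delta=\inf u+\delta$ on the right concludes the proof. Alternatively, the whole statement can be quoted from Saloff-Coste's and Grigor'yan's weak Harnack inequality for the operator $-\Delta+d$ on manifolds with $\mathrm{Ric}\ge0$; the sketch above indicates how that result is obtained.
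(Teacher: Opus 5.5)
Your proposal is correct and follows essentially the paper's route. The paper only observes that, by Bishop--Gromov and Saloff-Coste, nonnegative Ricci curvature gives the volume doubling and local Sobolev/Poincaré hypotheses of Zhang's Lemma~3.1, and then quotes that weak Harnack inequality; you additionally sketch the Moser iteration behind it (negative powers, the $\log u$/John--Nirenberg crossover, positive powers).

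One minor inconsistency: your positive-power iteration actually gives every $q\in(0,N/(N-2))$, so your remark that $q$ cannot be arbitrary there is unfounded. Moreover, for the small $q$ the statement asks for, the crossover step alone already suffices.
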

\begin{proof}
The inequality follows as in \cite[page 342]{Zhang_2000_KMJ}. Indeed, since the Ricci curvature is nonnegative, from the Gromov--Bishop volume comparison theorem and the results in \cite{Saloff-Coste_1992_JDE}, the assumptions of \cite[Lemma 3.1]{Zhang_2000_KMJ} are satisfied.
\end{proof}

\begin{Rem}
The same result holds if $M$ has a boundary, provided that $B(R) \cap \partial M = \emptyset$ (see \cite[Lemma 3.1]{Zhang_2000_KMJ}).
\end{Rem}

In view of Proposition \ref{Pr:Harnack}, we can argue as in \cite[Proof of Theorem 8.19]{GT}, obtaining
\begin{Prop}[Strong minimum principle]\label{Pr:smp}
Let $d \ge 0$ and assume that the Ricci curvature of $(M,g)$ is nonnegative. Assume, additionally, that $u \in H^1(M)$ satisfies $-\Delta u + d u \ge  0$. If there exists $\Omega \subset M$ open and connected and a geodesic ball $B \subset \Omega$ such that
\begin{equation*}
\inf_B u = \inf_\Omega u \le 0,
\end{equation*}
then $u$ is constant in $\Omega$.
\end{Prop}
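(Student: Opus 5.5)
The plan is to run the classical argument of \cite[Proof of Theorem 8.19]{GT}, with Proposition \ref{Pr:Harnack} doing the work of the weak Harnack inequality. Put $m := \inf_\Omega u \le 0$, understood as an essential infimum, and set $v := u - m \ge 0$ in $\Omega$. To apply Harnack's inequality we need $v$ to be a supersolution, and here the sign of $m$ is exactly what is needed: since $d \ge 0$ and $m \le 0$,
\[
-\Delta v + d v = (-\Delta u + d u) - d m \ge 0
\]
in the weak sense on $\Omega$. The condition $m\le 0$ is therefore essential and is used only at this point. There is a technicality because $v$ need not belong to $H^1(M)$, since constants are not in $H^1(M)$ when $M$ has infinite volume. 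Proposition \ref{Pr:Harnack} is purely local, so I will apply it to $v\chi$, where $\chi$ is a cutoff equal to $1$ near the ball in question, or simply observe that its proof in \cite{Zhang_2000_KMJ} uses only $v\in H^1(B(R))$.

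The core step is to show that the set
\[
\Sigma := \{ x \in \Omega : \essinf_{B(x,r)} v = 0 \text{ for every } r>0 \}
\]
is open, closed in $\Omega$, and nonempty. Closedness follows directly from the definition. For nonemptiness, the hypothesis gives $\inf_B v = 0$ for a ball $B = B(x_0,\rho)\subset\Omega$. Take $R$ so large that the concentric ball $B(x_0,16\rho)$ has closure in $\Omega$; if that fails, apply the covering argument below to smaller balls. Proposition \ref{Pr:Harnack} then yields
\[
\Bigl(\int_{B(x_0,8\rho)} v^q\Bigr)^{1/q} \le c\, \inf_{B(x_0,\rho)} v = 0,
\]
so $v=0$ a.e.\ on $B(x_0,8\rho)$ and $x_0\in\Sigma$. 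For openness, take $x\in\Sigma$ and $R$ small with $\overline{B(x,R)}\subset\Omega$. Then $\inf_{B(x,R/16)}v=0$, and Harnack gives $v=0$ a.e.\ on $B(x,R/8)$. Every point of $B(x,R/8)$ therefore lies in $\Sigma$, so $\Sigma$ is open.

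Since $\Omega$ is connected, $\Sigma=\Omega$, and consequently $v=0$ a.e.\ on $\Omega$, i.e.\ $u\equiv m$ there. The step I expect to need the most care is the geometric bookkeeping in the Harnack application. The given ball $B$ may not be small relative to the distance to $\partial\Omega$, whereas Proposition \ref{Pr:Harnack} controls only $B(R/16)$ from the integral over $B(R/8)$. I would handle this by noting that $\inf_B v=0$ forces $\essinf_{B(y,r)}v=0$ for some $y\in\overline B$ and every $r>0$. Concretely, cover $\overline B$ by finitely many small balls; one of them must have zero essential infimum, and iterating this gives a nested sequence of balls whose centers converge to such a $y$. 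One then checks that $y\in\Omega$, using a slightly shrunken ball if necessary, so that $\Sigma\neq\emptyset$ without any restriction on the size of $B$.
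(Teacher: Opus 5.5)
Your route is the one the paper intends. The paper proves this proposition only by pointing to the proof of \cite[Theorem 8.19]{GT}, with Proposition \ref{Pr:Harnack} replacing the weak Harnack inequality. When $\overline B\subset\Omega$ your argument is essentially complete. That is the hypothesis $B\subset\subset\Omega$ in \cite{GT}, and it is all the paper needs, since it applies the result with $\Omega=M$.

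However, the step ``one then checks that $y\in\Omega$, using a slightly shrunken ball if necessary'' fails when $\overline B\not\subset\Omega$. The essential infimum of $v$ over $B$ may be approached only near points of $\partial B\cap\partial\Omega$, so $y\notin\Omega$. Replacing $B$ by a smaller ball $B'$ then destroys $\inf_{B'}v=0$.

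This cannot be fixed inside your argument. It only uses that $v$ is a nonnegative supersolution in $\Omega$, and with that information alone the conclusion is false. In $M=\R^N$ take $\Omega=B=B(0,1)$ and $u=(1+x_1)\chi$, with $\chi\in C_0^\infty(\R^N)$ equal to $1$ on $B$. Then $-\Delta u+du=d(1+x_1)\ge0$ in $\Omega$ and $\inf_B u=\inf_\Omega u=0$, but $u$ is not constant in $\Omega$.

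For the statement as written you must use that the inequality holds on all of $M$ with $u\in H^1(M)$. Testing it with $u^-$ gives $\int_M|\nabla u^-|^2\,\dV+d\int_M(u^-)^2\,\dV\le0$, hence $u\ge0$ on $M$. If $d=0$, this gives $u^-$ constant, so $u^-=0$ unless $M$ has finite volume. Under nonnegative Ricci curvature, finite volume forces $M$ to be compact, and there $u$ is harmonic and hence constant. Consequently $m=0$, which has three effects:
\begin{enumerate}[label=(\roman*)]
\item it settles the finiteness of $m$, which your proposal takes for granted;
\item it makes the cutoff $v\chi$ unnecessary;
\item it lets you run the open--closed argument in the connected component of $M$ containing $\Omega$, where Proposition \ref{Pr:Harnack} applies on every ball and the point $y\in\overline B$ causes no trouble.
\end{enumerate}

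There are two smaller slips. First, $\Sigma$ as defined (``$\essinf_{B(x,r)}v=0$ for every $r>0$'') can be empty. In your setting $v=u-m$ may be negative outside $\Omega$, so large balls can have negative essential infimum; require $\essinf_{B(x,r)}v\le0$ instead, or restrict to small $r$. Second, with $R=16\rho$, Proposition \ref{Pr:Harnack} gives $v=0$ a.e.\ on $B(x_0,2\rho)$, not on $B(x_0,8\rho)$.
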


\section{\texorpdfstring{$\varepsilon$}{epsilon}-approximations}\label{S:eps}

In this section, in addition to (\ref{AssV}), we consider the following stronger variants of (\ref{AssA}) and (\ref{AssB}):
\begin{itemize}
    \item[(\namedlabel{AssA1}{A1})] $\cA \in L^1(M) \cap L^{2N/(N+2)}(M)$, $\cA \geq 0$, and $\cA \not= 0$;
    \item[(\namedlabel{AssB1}{B1})] $\cB \in L^\infty (M)$ and $\cB_+ \not = 0$.
\end{itemize}

This section aims to prove the following weaker version of Theorem \ref{thm:main}. Then, in the next section, we will exploit the fact that $\cA$ and $\cB$ given in (\ref{AssA}) and (\ref{AssB}) can be approximated by functions satisfying (\ref{AssA1}) and (\ref{AssB1}).

\begin{Prop}\label{prop:main}
Assume that the Ricci curvature of $(M,g)$ is bounded below and \eqref{VolumesPositiveNew} holds. If (\ref{AssA1}), (\ref{AssB1}), (\ref{AssV}), and \eqref{eq:B+}--\eqref{eq:ThetaK} hold, then \eqref{eq:main} admits a nonnegative finite-energy supersolution $u \in H^1 (M)$. If, moreover, the Ricci curvature is nonnegative and $\cB \ge 0$, then $u$ is a positive solution to \eqref{eq:main}.
\end{Prop}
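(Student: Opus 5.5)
I will follow the strategy of \cite{Hebey}, adapted to the noncompact setting. For $\varepsilon>0$ I consider the regularized functional
\[
I_\varepsilon(u)=\frac12\|u\|^2-\frac{1}{2^*}\int_M \cB(x)\,u_+^{2^*}\,\dV+\frac{1}{2^*}\int_M \frac{\cA(x)}{(\varepsilon+u_+^2)^{2^*/2}}\,\dV .
\]
I normalize the quadratic part in the standard way; the factor convention in \eqref{eq:I} is inessential.

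By (\ref{AssA1}), (\ref{AssB1}) and the Sobolev embedding (available thanks to \eqref{VolumesPositiveNew}), $I_\varepsilon\in C^1(H^1(M))$. Its critical points solve
\[
-\Delta u+Vu=\cB u_+^{2^*-1}+\cA u_+(\varepsilon+u_+^2)^{-2^*/2-1}.
\]
Testing this equation with $u_-$ shows that critical points are nonnegative. The integrability $\cA\in L^{2N/(N+2)}$ is what makes the derivative of the singular term continuous on $H^1$.

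\textbf{Step 1: mountain pass geometry.} I check a mountain pass geometry that is uniform in $\varepsilon$.
\begin{itemize}
\item On the sphere $\|u\|=\rho$, I bound $\int\cB u_+^{2^*}\le |\cB_+|_\infty S\rho^{2^*}$ and drop the positive singular term. This gives $I_\varepsilon\ge \frac12\rho^2-\frac1{2^*}|\cB_+|S\rho^{2^*}$.
\item At the base point $t_0\psi$, with $\psi$ chosen so that $\int\cB|\psi|^{2^*}>0$, I use the crude bound $\int\cA(\varepsilon+t_0^2\psi^2)^{-2^*/2}\le t_0^{-2^*}\int\cA|\psi|^{-2^*}$, which is uniform in $\varepsilon$.
\item Along the ray $t\psi$ with $t\to\infty$, the energy tends to $-\infty$ by \eqref{eq:B+}.
\end{itemize}
The role of \eqref{eq:psiK}--\eqref{eq:ThetaK} is to balance these quantities: taking $\rho=\Theta\,(|\cB_+|S)^{-(N-2)/4}$ and scaling $t_0\psi$ so that $\|t_0\psi\|$ is small, I obtain
\[
I_\varepsilon(t_0\psi)<\inf_{\|u\|=\rho}I_\varepsilon ,
\]
while the mountain pass level satisfies $c_\varepsilon\le C$ uniformly in $\varepsilon$ and stays below the compactness threshold $\frac1N(|\cB_+|S)^{-(N-2)/2}$. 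I expect the inequality \eqref{eq:ThetaK} to be precisely the algebraic outcome of comparing $I_\varepsilon$ on $\|u\|=\Theta(\cdot)$ with the values at the endpoints, with the $\cB_-$ term entering through the estimate along the path inside $\supp\psi$.

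\textbf{Step 2: critical points for each $\varepsilon$.} Since $M$ may be noncompact, I will not try to obtain Palais--Smale compactness directly. Instead I take a bounded (PS) sequence at level $c_\varepsilon$ (or a Cerami sequence) and pass to a weak limit $u_\varepsilon$. Weak continuity of the derivative gives $I_\varepsilon'(u_\varepsilon)=0$: the $\cB$ term converges by local compactness plus Brezis--Lieb, and the singular term by dominated convergence using $\cA\in L^1$. I then need $u_\varepsilon\ne0$ with a uniform lower bound on its energy. I will obtain this by showing that the limit carries the geometry, i.e. by running the mountain pass in the cone where $I_\varepsilon(u)\ge I_\varepsilon(t_0\psi)$ fails; if this is hard, I will fall back on local minimization near $t_0\psi$ inside the ball $\|u\|\le\rho$. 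The latter is actually simpler. The term $\cA/(\cdot)^{2^*/2}$ is weakly lower semicontinuous and the $\cB$ part is controlled in the small ball, so a minimizer $u_\varepsilon$ of $I_\varepsilon$ over $\{\|u\|\le\rho\}$ lies in the interior because of the boundary estimate. This gives uniform bounds $\|u_\varepsilon\|\le\rho$ and $\int\cA(\varepsilon+u_\varepsilon^2)^{-2^*/2}\le C$.

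\textbf{Step 3: the limit $\varepsilon\to0$.} This is the main obstacle. Let $u_\varepsilon\rightharpoonup u$ in $H^1$. Fatou's lemma gives $\int\cA u^{-2^*}\le C$, so $u>0$ a.e. on $\supp\cA$ and $u$ has finite energy. For a test function $\varphi\ge0$, I pass to the limit in the linear and $\cB$ terms, and apply Fatou to the nonnegative singular term $\cA u_\varepsilon(\varepsilon+u_\varepsilon^2)^{-2^*/2-1}\varphi$. This yields \eqref{eq:supsol} with $\ge$, so $u$ is a supersolution; the requirement \eqref{eq:req} follows from the same Fatou bound together with $u>0$ on $\supp\cA$.

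For the equality under $\mathrm{Ric}\ge0$ and $\cB\ge0$, I need local uniform positivity: $\inf_B u_\varepsilon\ge c_B>0$ independently of $\varepsilon$. Each $u_\varepsilon$ satisfies $-\Delta u_\varepsilon+\|V\|_\infty u_\varepsilon\ge0$, so Proposition \ref{Pr:Harnack} gives
\[
\inf_{B(R/16)}u_\varepsilon\ge c^{-1}\Bigl(\int_{B(R/8)}u_\varepsilon^q\,\dV\Bigr)^{1/q}.
\]
It therefore suffices to show that $\int_{B(R/8)}u_\varepsilon^q$ is bounded below uniformly. I will argue by contradiction. If it tends to $0$ along a sequence, then $u_\varepsilon\to0$ in $L^q(B(R/8))$ and hence a.e.\ there. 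Covering $M$ by a chain of balls and using Harnack again, a.e.\ convergence to $0$ propagates to $\supp\cA$, which contradicts $\int\cA u^{-2^*}<\infty$ via Fatou. Alternatively, Proposition \ref{Pr:smp} applied to $u$ rules out $u$ vanishing on a ball, since $u\not\equiv0$.

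With uniform local positivity in hand, the singular integrands are dominated by $\cA c_B^{-2^*-1}|\varphi|$. Dominated convergence then yields equality in \eqref{eq:supsol} for all $\varphi\in C_0^\infty(M)$, and $\essinf_B u>0$ shows that $u$ is positive.
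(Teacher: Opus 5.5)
Once Step~1 is corrected (see below), your final route works, but it produces $u_\varepsilon$ differently from the paper. The paper keeps the mountain pass. A Palais--Smale sequence at level $m_\varepsilon$ is bounded via $I_\varepsilon(u_n)-\frac{1}{2^*}I_\varepsilon'(u_n)(u_n)$, where both singular terms are nonnegative, and its weak limit is a critical point. The uniform bound on $m_\varepsilon$, obtained from the segment joining $t_1\psi$ to an $\varepsilon$-independent $t_2\psi$ (Remark~\ref{rem:t}), then bounds both $\|u_\varepsilon\|$ and $\int_M \cA(\varepsilon+u_\varepsilon^2)^{-2^*/2}\,\dV$. That last bound alone forces $u_\varepsilon\neq 0$ for small $\varepsilon$ and $u_0>0$ a.e.\ where $\cA>0$. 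So the uniform lower energy bound and the ``cone'' argument you think you need in Step~2 are unnecessary.

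Your local minimization over $\{\|u\|\le t_0\}$ is a legitimate alternative. It needs neither the far endpoint $t_2\psi$ (hence neither $\int_M\cB|\psi|^{2^*}\,\dV>0$ nor the uniformity of $t_2$) nor Palais--Smale sequences. It directly gives $\|u_\varepsilon\|<t_0$ and $I_\varepsilon(u_\varepsilon)\le I_\varepsilon(\Theta t_0\psi)$, which is all the limiting step uses. The price is proving that the infimum is attained, since $\int_M\cB u_+^{2^*}\,\dV$ is not weakly continuous. Write $u_n=u+w_n$ with $w_n\rightharpoonup 0$; Brezis--Lieb gives $I_\varepsilon(u_n)=I_\varepsilon(u)+\frac12\|w_n\|^2-\frac{1}{2^*}\int_M\cB(w_n)_+^{2^*}\,\dV+o(1)$. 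The defect is nonnegative because $\limsup_n\|w_n\|\le t_0$, and this is what ``controlled in the small ball'' must mean. Your passage $\varepsilon\to0^+$ coincides with the paper's. Propagating $L^q$-smallness along a chain of balls via the weak Harnack inequality is a valid substitute for applying Proposition~\ref{Pr:smp} to $u_0$.

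Step~1 needs two corrections. First, $\Theta$ is misplaced. The sphere must be $\{\|u\|=t_0\}$ with $t_0=(Sb_+)^{-(N-2)/4}$, the maximum point of $\Phi(t)=\frac12t^2-\frac{1}{2^*}Sb_+t^{2^*}$, and the base point is $\Theta t_0\psi$ with $\|\psi\|=1$. Then $I_\varepsilon(\Theta t_0\psi)<\frac12\bigl(N\Theta^2+(N-2)\frac{b_-}{b_+}\Theta^{2^*}+(N-2)K\Theta^{-2^*}\bigr)\Phi(t_0)\le\Phi(t_0)$, which is exactly what \eqref{eq:ThetaK} encodes. With your radius $\Theta t_0$ the estimates do not close. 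For instance, for $\cB\ge0$ and the optimal $K=(N-1)^{-2(N-1)/(N-2)}$, the base-point bound is at least $\Phi(t_0)$, which exceeds the sphere bound $\Phi(\Theta t_0)$.

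Second, the number $\frac1N(|\cB_+|S)^{-(N-2)/2}$ you call the compactness threshold is precisely $\Phi(t_0)$. Every admissible path crosses $\{\|u\|=t_0\}$, so $m_\varepsilon>\Phi(t_0)$, not below it. This is harmless only because you never use it.

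Finally, a caveat you share with the paper. The lower bound on the sphere, and your defect estimate, use $|\cB_+|_{L^\infty(M)}$, whereas \eqref{eq:psiK}--\eqref{eq:ThetaK} involve $b_+=|\cB_+|_{L^\infty(\supp\psi)}$. The paper's ``argument similar to \eqref{eq:>Phi}'' tacitly requires these two constants to agree as well.
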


With the intention of proving Proposition \ref{prop:main}, consider, for $\varepsilon > 0$,  the auxiliary problem 
\begin{equation}\label{eq:appr}
-\Delta u + V(x)u = \cB(x) |u|^{2^*-2} u + \frac{\cA(x) u}{(\varepsilon + u^{2})^{2^*/2 + 1}}, \quad u \in H^1 (M).
\end{equation}
Analogously to Definition \ref{def:solution}, we say that $u\in H^1(M)$ solves \eqref{eq:appr} if, for every $\varphi\in C_0^\infty(M)$,
\[
\int_{M} \nabla u \nabla \varphi \,\dV
+ \int_{M} V(x)u \varphi \,\dV
=
\int_{M}\cB(x) |u|^{2^*-2} u \varphi \,\dV
+ \int_{M} \frac{\cA(x)u\varphi}{(\varepsilon+u^2)^{2^*/2+1}} \,\dV.
\]
Such solutions can be found as critical points of the functional $I_\varepsilon \colon H^1(M)\to\R$ given by
\begin{equation}
    \label{Ieps}
    I_\varepsilon(u)
:=
\frac{1}{2} \| u \|^2
-\frac{1}{2^*} \int_{M} \cB(x) |u|^{2^*} \,\dV
+\frac{1}{2^*} \int_{M} \frac{\cA(x)}{(\varepsilon + u^{2})^{2^*/2}} \,\dV.
\end{equation}

\begin{Lem}
For every $\varepsilon > 0$, $I_\varepsilon$ is of class $\cC^1$ and, for $u,v \in H^1(M)$,
\[
I_\varepsilon'(u)(v) = \int_{M} \left[\nabla u \nabla v + V(x) u v - \cB(x) |u|^{2^*-2} u v - \frac{\cA(x) u v}{(\varepsilon + u^2)^{2^*/2+1}} \right]\, \dV.
\]
\end{Lem}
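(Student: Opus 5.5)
We split $I_\varepsilon$ into three pieces and prove that each is of class $\cC^1$ on $H^1(M)$, with the stated derivative:
\[
I_\varepsilon = \tfrac12\|\cdot\|^2 - \tfrac{1}{2^*}J + \tfrac{1}{2^*}G_\varepsilon,
\qquad J(u)=\int_M \cB|u|^{2^*}\,\dV,
\qquad G_\varepsilon(u)=\int_M \cA(\varepsilon+u^2)^{-2^*/2}\,\dV .
\]
We work in the setting of this section, namely (\ref{AssA1}), (\ref{AssB1}), (\ref{AssV}), with the Ricci curvature bounded below and \eqref{VolumesPositiveNew}. Hence the embedding $H^1(M)\hookrightarrow L^{2^*}(M)$ of Subsection \ref{SS:SobSp} is available.

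The quadratic part $\frac12\|u\|^2$ is smooth, and its derivative is $\langle u,v\rangle$. For $J$ we use the standard Nemytskii argument. The map $u\mapsto |u|^{2^*}$ is $\cC^1$ from $L^{2^*}$ into $L^1$, with derivative $2^*|u|^{2^*-2}u\,v$. Since $\cB\in L^\infty(M)$, composing with the continuous embedding shows that $J\in\cC^1(H^1(M))$ and $J'(u)v=2^*\int_M\cB|u|^{2^*-2}uv\,\dV$. Concretely, Gateaux differentiability follows from the mean value theorem and dominated convergence. The dominating function is $|\cB|(|u|+|v|)^{2^*-1}|v|\in L^1$, by Hölder with exponents $2^*/(2^*-1)$ and $2^*$. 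Continuity of $u\mapsto|u|^{2^*-2}u$ from $L^{2^*}$ into $L^{2^*/(2^*-1)}$ then makes the Gateaux derivative continuous, which gives Fréchet $\cC^1$.

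For $G_\varepsilon$, set $f_\varepsilon(s)=(\varepsilon+s^2)^{-2^*/2}$. This function is smooth and bounded, $0<f_\varepsilon\le\varepsilon^{-2^*/2}$. Its derivative $f_\varepsilon'(s)=-2^*s(\varepsilon+s^2)^{-2^*/2-1}$ is bounded and globally Lipschitz, with constants depending only on $\varepsilon$. Because $\cA\in L^1$, $G_\varepsilon$ is finite on all of $H^1(M)$. For Gateaux differentiability we write
\[
\frac{f_\varepsilon(u+tv)-f_\varepsilon(u)}{t}=f_\varepsilon'(u+\theta t v)\,v .
\]
This quotient is dominated by $C_\varepsilon\cA|v|$, and $\cA|v|\in L^1$ by Hölder, since $\cA\in L^{2N/(N+2)}=L^{(2^*)'}$ and $v\in L^{2^*}$. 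This is exactly why (\ref{AssA1}) contains the exponent $2N/(N+2)$. The resulting derivative is $G_\varepsilon'(u)v=-2^*\int_M \cA u v(\varepsilon+u^2)^{-2^*/2-1}\,\dV$. After multiplication by $\frac1{2^*}$ this gives the stated formula.

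The remaining step, and the only slightly delicate one, is continuity of $u\mapsto G_\varepsilon'(u)$ into $(H^1)^*$. We estimate
\[
|G_\varepsilon'(u_n)v-G_\varepsilon'(u)v|\le |\cA|_{L^{(2^*)'}}^{1/2}\,\bigl|\,\cA^{1/2}|f_\varepsilon'(u_n)-f_\varepsilon'(u)|\,\bigr|_{L^{(2^*)'}}\,|v|_{L^{2^*}} .
\]
The first factor on the right can be avoided by writing directly
\[
\bigl|\cA\,(f_\varepsilon'(u_n)-f_\varepsilon'(u))\,v\bigr|_{L^1}\le \bigl|\cA\,(f_\varepsilon'(u_n)-f_\varepsilon'(u))\bigr|_{L^{(2^*)'}}\,|v|_{L^{2^*}} .
\]
The first factor tends to $0$ by a subsequence argument. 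If $u_n\to u$ in $H^1$, a subsequence converges a.e., and $|\cA(f_\varepsilon'(u_n)-f_\varepsilon'(u))|^{(2^*)'}\le C_\varepsilon\cA^{(2^*)'}\in L^1$, so dominated convergence applies. Since every subsequence has a further subsequence with the same limit, the whole sequence converges. Hence $G_\varepsilon\in\cC^1$, and so $I_\varepsilon\in\cC^1$ with the stated derivative.
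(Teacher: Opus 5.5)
Your proof is correct and is essentially the paper's argument. It uses the same bound via boundedness of $s\mapsto s(\varepsilon+s^2)^{-2^*/2-1}$ and H\"older with $\cA\in L^{2N/(N+2)}=L^{(2^*)'}$, $v\in L^{2^*}$, and the same dominated-convergence argument along an a.e.\ convergent subsequence for continuity of the derivative; you additionally spell out the G\^ateaux step, the subsequence-of-subsequence argument and the standard terms, which the paper leaves implicit.

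One harmless slip: in your first displayed continuity estimate, the middle factor should be measured in $L^{2(2^*)'}$, not $L^{(2^*)'}$. As written, the H\"older exponents do not sum to $1$. You immediately replace that estimate by the correct direct bound $\bigl|\cA\,(f_\varepsilon'(u_n)-f_\varepsilon'(u))\bigr|_{L^{(2^*)'}}|v|_{L^{2^*}}$, so nothing depends on it.
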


\begin{proof}
We only need to prove the statement for the rightmost term in \eqref{Ieps}, as it is standard for the remaining ones.
Consider
$$
a(x,s) := \frac{\cA(x)}{(\varepsilon + s^2)^{2^*/2}}, \quad x \in M, \, s \in \R.
$$
Since $a$ is smooth with respect to $s$ and
\[
\partial_s a(x,s) = -2^* \frac{\cA(x) s}{(\varepsilon + s^2)^{2^*/2+1}},
\]
we first observe that, by (\ref{AssA1}) and the fact that $\R \ni s \mapsto s / (\varepsilon + s^2)^{2^*/2+1}\in\R$ is bounded, there exists $C>0$ such that, for every $u, v \in H^1(M)$, there holds
\begin{equation*}
\int_M \frac{\cA(x) |u v|}{(\varepsilon + u^2)^{2^*/2+1}} \, \dV
\le
C \int_{M} \cA(x) |v| \, \dV \le |\cA|_{L^{2N/(N+2)}} |v|_{L^{2^*}} < +\infty.
\end{equation*}

Now we prove the continuity of the Fr\'echet derivative. Let $u_n \to u$ in $H^1(M)$ and $v \in H^1(M)$ such that $\|v\| = 1$. Up to a subsequence, we can assume that $u_n \to u$ a.e. in $M$. Since $H^1(M) \hookrightarrow L^{2^*}(M)$, there exists $C>0$, independent of $v$, such that
\begin{equation*}
\int_M \cA(x) \left| \frac{u_n}{(\varepsilon + u_n^2)^{2^*/2+1}} - \frac{u}{(\varepsilon + u^2)^{2^*/2+1}} \right| |v| \, \dV \le C \left|\cA \cdot \left( \frac{u_n}{(\varepsilon + u_n^2)^{2^*/2+1}} - \frac{u}{(\varepsilon + u^2)^{2^*/2+1}} \right) \right|_{L^{2N/(N+2)}}.
\end{equation*}
Finally, again using the boundedness of $\R \ni s \mapsto s / (\varepsilon + s^2)^{2^*/2+1}\in\R$, the right-hand side above tends to $0$ in view of the dominated convergence theorem.
\end{proof}

Now, we will follow the argument of \cite{Hebey}. We write $I_\varepsilon = I^{(1)} + I_\varepsilon^{(2)}$, where
\begin{equation*}
I^{(1)}(u) = \frac{1}{2} \| u \|^2
-\frac{1}{2^*} \int_{M} \cB(x) |u|^{2^*} \,\dV \quad \text{and} \quad I_\varepsilon^{(2)} = \frac{1}{2^*} \int_{M} \frac{\cA(x)}{(\varepsilon + u^{2})^{2^*/2}} \,\dV.
\end{equation*}
We recall $\psi \in H^1(M)$, $\Theta > 0$, and $K > 0$ from Theorem \ref{thm:main}, and for the sake of simplicity, we denote
\begin{equation*}
b_\pm := |\cB_\pm|_{L^\infty(\supp \psi)}.
\end{equation*}
Additionally, for $t \ge 0$, we introduce the functions
\begin{equation*}
\Phi(t) = \frac12 t^2 - \frac{1}{2^*} S b_+ t^{2^*} \quad \text{and} \quad \Psi(t) = \frac12 t^2 + \frac{1}{2^*} S b_- t^{2^*}.
\end{equation*}
It is readily seen that $\Phi$ is increasing on $[0,t_0]$ and decreasing on $[t_0,+\infty)$, where
\begin{equation*}
t_0 = \frac{1}{(S b_+)^{(N-2)/4}}.
\end{equation*}
In particular,
\begin{equation*}
\Phi(t_0) = \frac{1}{N (S b_+)^{N/2-1}}.
\end{equation*}
Up to replacing $\psi$ with $\psi / \|\psi\|$, we can assume that $\|\psi\| = 1$.\\
Observe that
\begin{equation}\label{eq:>Phi}
I_\varepsilon(t_0 \psi) > I^{(1)}(t_0 \psi) \ge \frac{t_0^2}{2} - S b_+ \frac{t_0^{2^*}}{2^*} = \Phi(t_0).
\end{equation}
Moreover, if $t_1 := \Theta t_0 \in (0,t_0)$ (note that, from \eqref{eq:ThetaK}, $\Theta < \sqrt{2/N} < 1$), since
\[
\Psi(t_1)
= \frac{\Theta^2}{2} \frac{1}{(S b_+)^{N/2-1}} + \frac{\Theta^{2^*}}{2^*} S b_- \frac{1}{(S b_+)^{N/2}}
= \frac{N \Theta^2}{2} \left( 1 + \frac{N-2}{N} \Theta^{2^*-2} \frac{b_-}{b_+} \right) \Phi(t_0)
\]
and, from \eqref{eq:psiK},
\begin{equation*}
\int_{M} \frac{\cA(x)}{(t_1 |\psi|)^{2^*}} \, \dV \le 
%{\color{red}\frac{1}{(\Theta t_0)^{2^*}} \frac{K}{(S b_+)^{N-1}} =} 
\frac{N K}{\Theta^{2^*}} \Phi(t_0),
\end{equation*}
we have also that
\[
I_\varepsilon(t_1 \psi) 
< \Psi(t_1) + \frac{1}{2^* t_1^{2^*}} \int_{M} \frac{\cA(x)}{|\psi|^{2^*}} \, \dV 
\le \frac{1}{2}\left( N \Theta^2 + (N-2) \frac{b_-}{b_+} \Theta^{2^*} + (N-2) \frac{K}{\Theta^{2^*}} \right) \Phi(t_0).
\]
Hence, from \eqref{eq:ThetaK} and \eqref{eq:>Phi}, we obtain
\begin{equation*}
I_\varepsilon(t_0 \psi) 
> \Phi(t_0)
> I_\varepsilon(t_1 \psi).
\end{equation*}

Now we prove that $I_\varepsilon$ has the mountain pass geometry. First, since
\[
I_\varepsilon(t \psi)
\leq
\frac{t^2}{2}-\frac{t^{2^*}}{2^*} \int_{M} \cB(x) |\psi|^{2^*} \, \dV + \frac{1}{2^* t^{2^*}} \int_{M} \frac{\cA(x)}{|\psi|^{2^*}} \, \dV,
\]
and so $\lim_{t \to +\infty} I_\varepsilon(t \psi) = -\infty$, there exists $t_2 \in (t_0,+\infty)$ such that
\begin{equation}\label{eq:MPt}
I_\varepsilon(t_2 \psi) \le I_\varepsilon(t_1 \psi).
\end{equation}
Next, let us consider the set
\begin{equation*}
\Gamma = \Set{\gamma \in \cC([0,1],H^1(M)) | \gamma(0) = t_1 \psi \text{ and } \gamma(1) = t_2 \psi}.
\end{equation*}
If $u \in H^1(M)$ and $\|u\| = t_0$, then an argument similar to \eqref{eq:>Phi} shows that
\begin{equation*}
I_\varepsilon(u) > \Phi(t_0) > I_\varepsilon(t_1 \psi).
\end{equation*}
This proves the claim because every path in $\Gamma$ must cross the sphere $\Set{u \in H^1(M) | \|u\| = t_0}$, and we can define the mountain pass level
\begin{equation*}
m_\varepsilon = \inf_{\gamma \in \Gamma} \max_{s \in [0,1]} I_\varepsilon\bigl(\gamma(s)\bigr) > \Phi(t_0) > 0.
\end{equation*}

\begin{Rem}\label{rem:t}
Since, given $u \in H^1(M)$, $I_\varepsilon(u)$ is nonincresing with respect to $\varepsilon$, assuming that $\varepsilon$ varies in a fixed interval $(0,\varepsilon_0]$, we can refine \eqref{eq:MPt} to
\begin{equation}\label{eq:MPtREF}
I_\varepsilon(t_2 \psi) \le I(t_2 \psi) \le I_{\varepsilon_0}(t_1 \psi) \le I_\varepsilon(t_1 \psi).
\end{equation}
hence, we can take $t_2$ such that it does not depend on $\varepsilon$.\\
Next, assume that there exist nonnegative nondecreasing sequences $(\cA_n)_n$, $(\cB_{n,+})_n$, and $(\cB_{n,-})_n$ such that $\cA_n \to \cA$, $\cB_{n,+} \to \cB_+$, and $\cB_{n,-} \to \cB_-$ a.e. as $n \to +\infty$. Denoting $I^n$ and $I_{\varepsilon_0}^n$  the functionals $I$ and $I_{\varepsilon_0}$ where $\cB$ and $\cA$ are replaced with $\cB_{n,+} - \cB_{n,-}$ and $\cA_n$ respectively, we can then refine the middle inequality in \eqref{eq:MPtREF} to
\begin{align*}
I^n(t_2 \psi) & \le \frac{t_2^2}{2} - \frac{t_2^{2^*}}{2^*} \int_M \bigl(\cB_{1,+}(x) - \cB_-(x)\bigr) |\psi|^{2^*} \, \dV + \frac{1}{2^* t_2^{2^*}} \int_M \frac{\cA(x)}{|\psi|^{2^*}} \, \dV \\
& \le \frac{t_1^2}{2} - \frac{t_1^{2^*}}{2^*} \int_M \bigl(\cB_+(x) - \cB_{1,-}(x)\bigr) |\psi|^{2^*} \, \dV + \frac{1}{2^*} \int_M \frac{\cA_1(x)}{(\varepsilon_0 +|t_1 \psi|^2)^{2^*}} \, \dV \le I_{\varepsilon_0}^n(t_1 \psi)
\end{align*}
and obtain that $t_2$ does not depend on $n$ either.\\
These facts will be used in Section \ref{Sec:pfmain}.
\end{Rem}
Then, for the approximated problem, we have
\begin{Prop}\label{prop:EX}
For every $\varepsilon>0$ there exists a nonnegative solution $u_\varepsilon \in H^1(M)$ to \eqref{eq:appr}. Moreover, $u_\varepsilon \in \cC_\textup{loc}^{1,\alpha}(M)$ for all $\alpha \in (0,1)$.
\end{Prop}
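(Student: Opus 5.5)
We obtain $u_\varepsilon$ as a mountain pass critical point of $I_\varepsilon$ at the level $m_\varepsilon$ constructed above. The argument needs a Palais--Smale sequence whose weak limit is a nontrivial critical point, a sign argument, and a regularity argument.

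\textbf{Step 1: a bounded Palais--Smale sequence.} The mountain pass geometry established above, together with Ekeland's principle (or the general minimax principle), gives a sequence $(v_n)\subset H^1(M)$ with $I_\varepsilon(v_n)\to m_\varepsilon$ and $I_\varepsilon'(v_n)\to 0$.

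To obtain nonnegativity we first use that $I_\varepsilon(|u|)=I_\varepsilon(u)$. Since $\Gamma$ has endpoints $t_1\psi,t_2\psi$, we may assume $\psi\ge0$ by replacing $\psi$ with $|\psi|$, which changes none of the hypotheses. Replacing each path $\gamma$ by $|\gamma|$ keeps it in $\Gamma$ and does not change $\max I_\varepsilon\circ\gamma$. The quantitative deformation lemma (Willem's version) then yields $v_n$ with $\operatorname{dist}(v_n,|\gamma_n|([0,1]))\to 0$, for almost-optimal paths $\gamma_n$. Hence $\|(v_n)_-\|\to0$.

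Boundedness comes from $I_\varepsilon(v_n)-\frac{1}{2^*}I_\varepsilon'(v_n)(v_n)$. This equals
\[
\frac1N\|v_n\|^2+\frac1{2^*}\int_M\cA\Bigl[(\varepsilon+v_n^2)^{-2^*/2}+v_n^2(\varepsilon+v_n^2)^{-2^*/2-1}\Bigr]\dV .
\]
The bracket is bounded by $C_\varepsilon$ and $\cA\in L^1$ by (\ref{AssA1}), so $\|v_n\|^2\le C+o(1)\|v_n\|$.

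\textbf{Step 2: passing to the limit.} Up to a subsequence $v_n\rightharpoonup u_\varepsilon$ in $H^1(M)$ and a.e. on compact sets, via Rellich on balls and a diagonal argument. For $\varphi\in C_0^\infty(M)$, the terms in $I_\varepsilon'(v_n)\varphi$ pass to the limit as follows:
\begin{itemize}
\item the linear term converges by weak convergence;
\item the term with $\cB|v_n|^{2^*-2}v_n$ converges because $|v_n|^{2^*-2}v_n\rightharpoonup|u|^{2^*-2}u$ weakly in $L^{2^*/(2^*-1)}$ (bounded and a.e. convergent) and $\cB\varphi\in L^{2^*}$;
\item the singular term converges by dominated convergence, with dominant $C\cA|\varphi|$.
\end{itemize}
So $u_\varepsilon$ solves \eqref{eq:appr}, and $u_\varepsilon\ge0$ since $(v_n)_-\to0$.

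Note that the statement does not require $u_\varepsilon\ne0$. Still, $u_\varepsilon=0$ is not a solution, because the singular term vanishes at $0$ only when $u=0$, which is fine. Indeed $0$ does solve \eqref{eq:appr}, since $\cA u/(\ldots)=0$ at $u=0$. So if nontriviality is needed later, we would extract it from $m_\varepsilon>0$ through a concentration-compactness splitting. I expect this to be the main obstacle, given the critical exponent and the noncompact $M$, and would defer it to the limiting section.

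\textbf{Step 3: regularity.} We write the equation as $-\Delta u+Vu=a(x)u+f$ with $a=\cB|u|^{2^*-2}\in L^{N/2}_{\rm loc}$ and $f=\cA u(\varepsilon+u^2)^{-2^*/2-1}$, where $|f|\le C\cA$.

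A Brezis--Kato/Moser iteration in local charts gives $u\in L^p_{\rm loc}$ for all $p<\infty$. Then $W^{2,p}$ estimates yield $\cC^{1,\alpha}_{\rm loc}$, provided the right-hand side lies in $L^p_{\rm loc}$ for $p>N$. This is guaranteed by $\cB\in L^\infty$. For the $\cA$ term, however, we only have $\cA\in L^{2N/(N+2)}$, so this step needs extra local integrability of $\cA$.

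As a first attempt I would use $\cA\in L^{2N/(N+2)}\cap L^1$ to get $W^{2,2N/(N+2)}_{\rm loc}$ and then bootstrap. Full $\cC^{1,\alpha}$ seems to need $\cA\in L^p_{\rm loc}$ with $p>N$, so I expect the paper either assumes this implicitly or interprets the regularity claim under additional smoothness of the coefficients.
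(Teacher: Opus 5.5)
Your Steps 1--2 are correct and match the paper's proof in structure: a Palais--Smale sequence at level $m_\varepsilon$, boundedness from $I_\varepsilon(v_n)-\frac{1}{2^*}I_\varepsilon'(v_n)(v_n)$, and passage to the limit term by term. In that identity the $\cA$-terms are nonnegative and can simply be dropped. The real difference is how you obtain the sign.

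\begin{itemize}
\item The paper replaces $u$ by $u_+$ on the right-hand side of \eqref{eq:appr}. Testing the truncated equation with $u_-$ gives $\|u_-\|^2=0$ at once. The price is that it tacitly works with a modified functional, whose geometry must be checked as before.
\item You keep $I_\varepsilon$, use $I_\varepsilon(|u|)=I_\varepsilon(u)$ and $\psi\ge0$ to restrict to paths in the positive cone $P$, and invoke the localized general minimax principle. This is heavier, but it produces a critical point of the original $I_\varepsilon$ at level $m_\varepsilon$.
\end{itemize}

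One imprecision: $\operatorname{dist}(v_n,P)\to0$ in $H^1(M)$ does not give $\|(v_n)_-\|\to0$. Since $(v_n)_-\le|v_n-w_n|$ for $w_n\ge0$, it only gives $|(v_n)_-|_{L^2}+|(v_n)_-|_{L^{2^*}}\to0$. That is enough for $u_\varepsilon\ge0$, and the $H^1$ statement follows by testing $I_\varepsilon'(v_n)$ with $(v_n)_-$.

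Your nontriviality paragraph contradicts itself. The correct version: $0$ does solve \eqref{eq:appr}, so the proposition read literally is trivial, and its content concerns the mountain-pass $u_\varepsilon$. Where nontriviality is needed, no concentration-compactness is required. Fatou's lemma in the identity of Step 1 gives $\frac1{2^*}\int_M\cA\,(\varepsilon+u_\varepsilon^2)^{-2^*/2}\,\dV\le m_\varepsilon$, which is bounded uniformly in $\varepsilon$ by Lemma \ref{lem:bdd}. For $u_\varepsilon=0$ the left-hand side equals $\varepsilon^{-2^*/2}|\cA|_{L^1}/2^*$, which blows up. So the singular term itself excludes $u_\varepsilon=0$ for small $\varepsilon$; this is essentially the paper's argument in Lemma \ref{lem:pos}.

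Step 3 leaves the $\cC^{1,\alpha}_\textup{loc}$ claim unproved, but your diagnosis is right, and the defect lies in the paper's proof. The paper only cites ``a standard argument'' from Struwe. That argument needs a right-hand side bounded by $C(1+|u|^{2^*-1})$ with bounded coefficients, whereas here one only has $|f|\le C_\varepsilon\cA$ with $\cA\in L^1\cap L^{2N/(N+2)}$.

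The claim can in fact fail under (\ref{AssA1}). Take $M=\R^N$, $V\equiv1$, $\cB\equiv1$, $\psi(x)=e^{-|x|^2}$, and $\cA=\theta|x|^{-a}\chi_{B(0,1)}$ with $1<a<\frac{N+2}{2}$ and $\theta$ small.
\begin{itemize}
\item For small $\varepsilon$, $u_\varepsilon\neq0$ by the previous paragraph. Since $-\Delta u_\varepsilon+u_\varepsilon\ge0$, Propositions \ref{Pr:Harnack} and \ref{Pr:smp} give $\inf_{B(0,\rho)}u_\varepsilon>0$.
\item If $u_\varepsilon$ were $\cC^1$ near $0$, it would be bounded there, so $-\Delta u_\varepsilon\ge c|x|^{-a}-C$ on $B(0,\rho)$.
\item The spherical means $\mu(r):=|\partial B_r|^{-1}\int_{\partial B_r}u_\varepsilon$ would then satisfy $\mu'(r)\le-c'r^{1-a}+C'r\to-\infty$, contradicting $|\mu'|\le\sup_{B(0,\rho)}|\nabla u_\varepsilon|$.
\end{itemize}
For $2\le a<\frac{N+2}{2}$, the same computation even rules out $u_\varepsilon\in L^\infty_\textup{loc}$, which Lemma \ref{lem:pos} relies on.

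So an extra hypothesis is needed, such as $\cA\in L^p_\textup{loc}(M)$ for every $p<\infty$. A single $p>N$ only yields $\alpha\le1-N/p$. Under that hypothesis, your Brezis--Kato plus $W^{2,p}$ bootstrap is exactly the standard argument.
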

\begin{proof}
Since $I_\varepsilon$ has the mountain pass geometry, there exists $(u_n)_n \subset H^1(M)$ such that
\begin{equation*}
\lim_n I_\varepsilon(u_n) = m_\varepsilon \quad \text{and} \quad \lim_n I_\varepsilon'(u_n) = 0.
\end{equation*}
Next, $(u_n)_n$ is bounded in $H^1(M)$ since, for $n$ sufficiently large,
\begin{equation}\label{nehari}
\begin{aligned}
m_\varepsilon + 1 + \|u_n\| &
\geq I_\varepsilon(u_n) - \frac{1}{2^*} I_\varepsilon'(u_n)(u_n)\\
& =
\frac{1}{N} \| u_n \|^2
+ \frac{1}{2^*} \int_{M} \frac{\cA(x)}{(\varepsilon + u_n^{2})^{2^*/2}} \,\dV
+ \frac{1}{2^*} \int_{M} \frac{\cA(x) u_n^2}{(\varepsilon + u_n^{2})^{2^*/2 + 1}} \,\dV
\geq \frac{1}{N} \| u_n \|^2.
\end{aligned}
\end{equation}
Thus, up to a subsequence, there exists $u_\varepsilon \in H^1(M)$ such that $u_n \rightharpoonup u_\varepsilon$ in $H^1(M)$ and $u_n \to u_\varepsilon$ a.e. in $M$. From the weak convergence and Vitali's theorem, we obtain that for every $v \in H^1(M)$
\[
\int_{M} \nabla u_n \nabla v + V(x) u_n v - \cB(x) |u_n|^{2^*-2} u_n v \, \dV \to \int_{M} \nabla u_\varepsilon \nabla v + V(x) u_\varepsilon v - \cB(x) |u_\varepsilon|^{2^*-2} u_\varepsilon v \, \dV,
\]
while from (\ref{AssA1}), the boundedness of $\R \ni s \mapsto s / (\varepsilon + s^2)^{2^*/2+1}\in\R$, and the dominated convergence theorem we get
\[
\int_{M} \frac{\cA(x) u_n v}{(\varepsilon + u_n^2)^{2^*/2+1}} \, \dV \to \int_{M} \frac{\cA(x) u_\varepsilon v}{(\varepsilon + u_\varepsilon^2)^{2^*/2+1}} \, \dV.
\]
Consequently, $u_\varepsilon$ is a solution to \eqref{eq:appr}. Replacing $u$ with $u_+:=\max\{u,0\}$ in the right hand-side of \eqref{eq:appr}, we can assume that $u_\varepsilon\geq 0$. A standard argument (cf., e.g., \cite[page 271]{Struwe}) yields that $u_\varepsilon \in \cC_\textup{loc}^{1,\alpha}(M)$ for all $\alpha \in (0,1)$.
\end{proof}

Having obtained a family of solutions to the problems \eqref{eq:appr}, now we study the properties of $(m_\varepsilon)_\varepsilon$ and $(u_\varepsilon)_\varepsilon$.

\begin{Lem}\label{lem:bdd}
$(m_\varepsilon)_\varepsilon$ and $(\|u_\varepsilon\|)_\varepsilon$ are bounded.
\end{Lem}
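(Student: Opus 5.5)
Both bounds come from the specific path $s\mapsto \gamma(s)=\bigl((1-s)t_1+st_2\bigr)\psi$, which lies in $\Gamma$, together with the identity already used in \eqref{nehari}.

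\emph{Upper bound on $m_\varepsilon$.} Since $\gamma\in\Gamma$,
\[
m_\varepsilon\le \max_{t\in[t_1,t_2]} I_\varepsilon(t\psi).
\]
Because $\varepsilon\mapsto I_\varepsilon(u)$ is nonincreasing and $(\varepsilon+t^2\psi^2)^{-2^*/2}\le |t\psi|^{-2^*}$, for $t\in[t_1,t_2]$ we have
\[
I_\varepsilon(t\psi)\le \frac{t^2}{2}-\frac{t^{2^*}}{2^*}\int_M\cB|\psi|^{2^*}\,\dV+\frac{1}{2^*t_1^{2^*}}\int_M\frac{\cA}{|\psi|^{2^*}}\,\dV .
\]
The middle term is finite by \eqref{eq:B+}, and the last term is finite by \eqref{eq:psiK}. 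By Remark \ref{rem:t}, $t_2$ can be chosen independently of $\varepsilon\in(0,\varepsilon_0]$, so the right-hand side is a constant $C_0$ independent of $\varepsilon$. Together with $m_\varepsilon>\Phi(t_0)>0$, this shows that $(m_\varepsilon)_\varepsilon$ is bounded.

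\emph{Bound on $\|u_\varepsilon\|$.} Take the Palais--Smale sequence $(u_n)_n$ from Proposition \ref{prop:EX}. By \eqref{nehari},
\[
\frac1N\|u_n\|^2\le m_\varepsilon+1+\|u_n\|\le C_0+1+\|u_n\| ,
\]
which gives $\|u_n\|\le C_1$ with $C_1$ depending only on $C_0$ and $N$. Weak lower semicontinuity of the norm then yields $\|u_\varepsilon\|\le \liminf_n\|u_n\|\le C_1$.

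Alternatively, one can use $u_\varepsilon$ directly: test \eqref{eq:appr} with $u_\varepsilon$ to get $I_\varepsilon'(u_\varepsilon)(u_\varepsilon)=0$. Then $\frac1N\|u_\varepsilon\|^2\le I_\varepsilon(u_\varepsilon)$ by the same computation, but this route requires $I_\varepsilon(u_\varepsilon)\le m_\varepsilon$, which weak convergence alone does not give. I therefore prefer the lower-semicontinuity argument above.

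The main step is ensuring that $t_2$, and hence the comparison path, is uniform in $\varepsilon$. Remark \ref{rem:t} provides exactly this, so I expect no real obstacle.
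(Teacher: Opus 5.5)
Your proposal is correct and follows essentially the paper's route: the straight-line path $\bigl(st_2+(1-s)t_1\bigr)\psi\in\Gamma$ bounds $m_\varepsilon$, with $t_2$ independent of $\varepsilon$ as in Remark \ref{rem:t} (the paper relies on this implicitly, whereas you invoke it explicitly), and \eqref{nehari} together with weak lower semicontinuity bounds $\|u_\varepsilon\|$. The differences are cosmetic: the paper passes to the limit in \eqref{nehari} to obtain the sharper bound $\frac1N\|u_\varepsilon\|^2\le m_\varepsilon$, which it reuses in \eqref{eq:bdd}, and it controls the $\cB$-term through $\Psi$ rather than through $\int_M\cB|\psi|^{2^*}\,\dV>0$.
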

\begin{proof}
Passing to the limit in \eqref{nehari} and using the weak lower semicontinuity of the norm, we get
\begin{equation}\label{eq:bound1}
m_\varepsilon\geq \frac{1}{N} \| u_\varepsilon \|^2.
\end{equation}
Hence, it is enough to show that $\sup_{\varepsilon > 0} m_\varepsilon < +\infty$. Let us consider $\gamma \in \Gamma$ given by
\begin{equation*}
\gamma(s) = \bigl(st_2+(1-s)t_1\bigr) \psi, \quad s \in [0,1].
\end{equation*}
Then, recalling that $\Psi$ is increasing,
\begin{equation*}
I_\varepsilon\bigl(\gamma(\psi)\bigr) < \Psi\bigl(st_2+(1-s)t_1\bigr) + \frac{1}{2^* \bigl(st_2+(1-s)t_1\bigr)^{2^*}} \int_{M} \frac{\cA(x)}{|\psi|^{2^*}} \, \dV < \Psi(t_2) + \frac{1}{2^* t_1^{2^*}} \int_{M} \frac{\cA(x)}{|\psi|^{2^*}} \, \dV
\end{equation*}
for every $s \in [0,1]$, whence
\begin{equation}\label{eq:bound2}
m_\varepsilon \le \max_{s \in [0,1]} I_\varepsilon\bigl(\gamma(s)\bigr) < \Psi(t_2) + \frac{1}{2^* t_1^{2^*}} \int_{M} \frac{\cA(x)}{|\psi|^{2^*}} \, \dV
\end{equation}
for every $\varepsilon > 0$.
\end{proof}

\begin{Lem}\label{lem:pos}
If the Ricci curvature of $(M,g)$ is nonnegative, then there exists $\varepsilon_0 > 0$ such that for every $\varepsilon \in (0,\varepsilon_0]$ the function $u_\varepsilon$ obtained in Proposition \ref{prop:EX} is positive everywhere.
\end{Lem}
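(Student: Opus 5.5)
The proof uses the strong minimum principle, Proposition~\ref{Pr:smp}, applied with $\Omega = M$. It reduces to two facts: $u_\varepsilon$ is a supersolution of a linear equation $-\Delta u + d u \ge 0$ with a constant $d \ge 0$, and $u_\varepsilon$ is not identically zero.

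\textbf{Step 1: a linear supersolution.} Since $u_\varepsilon \ge 0$ solves \eqref{eq:appr}, and $\cA \ge 0$, $\cA u_\varepsilon (\varepsilon+u_\varepsilon^2)^{-2^*/2-1} \ge 0$, we get in the weak sense
\[
-\Delta u_\varepsilon + \bigl(V + \cB_-\, u_\varepsilon^{2^*-2}\bigr) u_\varepsilon \ge 0 .
\]
Here $V\in L^\infty(M)$ and $\cB_- \in L^\infty(M)$ by (\ref{AssB1}). The regularity part of Proposition~\ref{prop:EX} gives $u_\varepsilon \in \cC^{1,\alpha}_{\textup{loc}}(M)$, so $u_\varepsilon^{2^*-2}$ is locally bounded. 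Globally, I would either invoke a uniform $L^\infty$ bound or work locally: Proposition~\ref{Pr:smp} is really a local statement proved via Proposition~\ref{Pr:Harnack} on balls. On any ball $B$ we can set $d := |V|_{L^\infty} + |\cB_-|_{L^\infty}\sup_B u_\varepsilon^{2^*-2}$. Because $u_\varepsilon \ge 0$, we have $d u_\varepsilon \ge (V + \cB_- u_\varepsilon^{2^*-2})u_\varepsilon$, and hence $-\Delta u_\varepsilon + d u_\varepsilon \ge 0$ on $B$. The Harnack argument (as in \cite[Theorem 8.19]{GT}) then shows that the set $\{x : u_\varepsilon(x) = 0\}$ is open. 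It is also closed, by continuity. Since $M$ is connected, either $u_\varepsilon \equiv 0$ or $u_\varepsilon > 0$ everywhere.

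\textbf{Step 2: $u_\varepsilon \not\equiv 0$ for small $\varepsilon$.} This is where $\varepsilon_0$ enters, and I expect it to be the main obstacle. Suppose $u_\varepsilon \equiv 0$. I would argue via the energy, using that $u_n \rightharpoonup u_\varepsilon$ while $I_\varepsilon(u_n) \to m_\varepsilon > \Phi(t_0)$.

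From \eqref{nehari}, $m_\varepsilon = \lim \bigl[\frac1N\|u_n\|^2 + \text{(nonnegative } \cA\text{-terms)}\bigr]$. If $u_n \rightharpoonup 0$, the $\cA$-terms converge by dominated convergence to $\frac{1}{2^*}\int_M \cA\,\varepsilon^{-2^*/2}\,\dV$. This quantity blows up as $\varepsilon \to 0$. On the other hand, \eqref{eq:bound2} gives $m_\varepsilon \le C$ uniformly in $\varepsilon$. Therefore
\[
\frac{1}{2^*}\,\varepsilon^{-2^*/2} \int_M \cA \,\dV \le m_\varepsilon \le C,
\]
which fails once $\varepsilon^{2^*/2} < |\cA|_{L^1}/(2^* C)$; note that $|\cA|_{L^1} > 0$ by (\ref{AssA1}). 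This choice defines $\varepsilon_0$.

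To make the passage to the limit rigorous, I would verify that $\int_M \cA(\varepsilon+u_n^2)^{-2^*/2}\,\dV \to \int_M \cA\,\varepsilon^{-2^*/2}\,\dV$ along a subsequence with $u_n \to 0$ a.e. This holds by dominated convergence with dominating function $\cA\,\varepsilon^{-2^*/2} \in L^1(M)$. Then $m_\varepsilon \ge \frac{1}{2^*}\varepsilon^{-2^*/2}|\cA|_{L^1}$, the contradiction follows, and Step 1 yields positivity everywhere.
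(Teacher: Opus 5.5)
Your proof is correct and follows essentially the same route as the paper. You rule out $u_\varepsilon\equiv 0$ for small $\varepsilon$ using the uniform bound on $m_\varepsilon$, applied directly along the Palais--Smale sequence, where the paper phrases the same fact as the bound \eqref{eq:epsC}; you then conclude with the Harnack-based strong minimum principle. Choosing the constant $d$ ball by ball is, if anything, a slightly more careful justification than the paper gives, since the paper applies Proposition \ref{Pr:smp} on all of $M$ with the only locally bounded coefficient $V+\cB_- u_\varepsilon^{2^*-2}$.
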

\begin{proof}
From Lemma \ref{lem:bdd} and the fact that $I_\varepsilon'(u_\varepsilon) (u_\varepsilon) = 0$, there exists $C>0$ such that for every $\varepsilon>0$
\begin{equation}\label{eq:epsC}
\int_M \frac{\cA(x)}{(\varepsilon + u_\varepsilon^2)^{2/2^*}} \, \dV \le C.
\end{equation}
If there existed a sequence $\varepsilon_j \to 0$ such that $u_{\varepsilon_j} = 0$ for all $j$, then \eqref{eq:epsC} would be false for every sufficiently large $j$. Next, we recall that $u_\varepsilon \in \cC_\textup{loc}^{1,\alpha}(M) \subset L_\textup{loc}^\infty(M)$ and rewrite \eqref{eq:appr} as
\[
-\Delta u_\varepsilon + V(x) u_\varepsilon + \cB_-(x) u_\varepsilon^{2^*-1} = \cB_+(x) u_\varepsilon^{2^*-1} + \frac{\cA(x) u_\varepsilon}{(\varepsilon + u_\varepsilon^{2})^{2^*/2 + 1}}.
\]
Take a geodesic ball $B \subset M$ and suppose by contradiction that $\essinf_B u_\varepsilon = 0$. Since $u_\varepsilon \geq 0$, $\essinf_M u_\varepsilon = 0$. Then, from Proposition \ref{Pr:smp}, $u_\varepsilon$ is constant on $M$, and therefore $u_\varepsilon = 0$, a contradiction. Hence, $\essinf_B u_\varepsilon > 0$.
\end{proof}

We will now prove the existence of a solution to the original problem by passing to the limit with $\varepsilon \to 0^+$. The challenging problem is to pass to the limit in the singular term, where we make use of the Harnack inequality.
\begin{proof}[Proof of Proposition \ref{prop:main}]

We divide the proof into three steps.

\textbf{Step 1: passing to the limit.} 
From Lemma \ref{lem:bdd}, we can assume that $u_\varepsilon \rightharpoonup u_0\in H^1(M)$ and $u_\varepsilon \to u_0$ a.e. in $ M$ as $\varepsilon \to 0^+$. Since $u_\varepsilon > 0$, we get $u_0 \geq 0$.
From \eqref{eq:epsC} and Fatou's lemma, we obtain that $u_0>0$ a.e. in the interior of $\supp A$. Indeed, if $\Omega
\subset\supp \cA$ with $\operatorname{vol}_g(\Omega)>0$,
$$
\int_{\Omega} \frac{\cA(x)}{u_0^{2^*}} \, \dV
\leq
\liminf_{\varepsilon \to 0^+} \int_{\Omega} \frac{\cA(x)}{(\varepsilon + u_\varepsilon^2)^{2^* / 2}} \, \dV
<
+\infty. 
$$
Due to to (\ref{AssB1}), $u_0$ has finite energy.

\textbf{Step 2: $u_0$ is a supersolution to \eqref{eq:main}.} Let $\varphi \in \cC_0^\infty (M)$. Recalling that
$$
\langle u_\varepsilon, \varphi \rangle = \int_{M} \nabla u_\varepsilon \nabla \varphi \,\dV
+ \int_{M} V(x)u_\varepsilon \varphi \,\dV,
$$
we know that
\begin{equation}\label{eq:u_e-sol}
\langle u_\varepsilon, \varphi \rangle = \int_{M}\cB(x) u_\varepsilon^{2^*-1} \varphi \,\dV
+ \int_{M} \frac{\cA(x) u_\varepsilon \varphi}{(\varepsilon+u_\varepsilon^2)^{2^*/2+1}} \,\dV.
\end{equation}
The weak convergence implies that $\langle u_\varepsilon, \varphi \rangle \to \langle u_0, \varphi \rangle$. From Vitali's convergence theorem, we know that
\begin{equation}\label{eq:B}
\int_{M}\cB(x) u_\varepsilon^{2^*-1} \varphi \,\dV \to \int_{M}\cB(x) u_0^{2^*-1} \varphi \,\dV.
\end{equation}
Assumming that $\varphi \geq 0$ and since $u_0>0$ a.e. in the interior of $\supp A$, applying Fatou's lemma again we get
\begin{align*}
+\infty>\liminf_{\varepsilon \to 0^+} \int_{M} \frac{\cA(x) u_\varepsilon \varphi}{(\varepsilon + u_\varepsilon^2)^{2^* / 2 + 1}} \, \dV
&= \liminf_{\varepsilon \to 0^+} \int_{\supp \varphi} \frac{\cA(x) u_\varepsilon \varphi}{(\varepsilon + u_\varepsilon^2)^{2^* / 2 + 1}} \, \dV \\
&\geq \int_{\supp \varphi} \frac{\cA(x) \varphi}{u_0^{2^*+1}} \, \dV
= \int_{M} \frac{\cA(x) \varphi}{u_0^{2^*+1}} \, \dV
\end{align*} 
and finally
\begin{equation*}
\langle u_0, \varphi \rangle \geq \int_{M}\cB(x) u_0^{2^*-1} \varphi \,\dV
+ \int_{M} \frac{\cA(x) \varphi}{u_0^{2^*+1}} \,\dV.
\end{equation*}

\textbf{Step 3:} $u_0$ is a positive solution to \eqref{eq:main} if the Ricci curvature is nonnegative and $\cB \ge 0$.
Arguing as in the proof of Lemma \ref{lem:pos}, we obtain that $u_0$ is positive. From Proposition \ref{prop:EX}, $u_\varepsilon \ge 0$ satisfies
\begin{equation*}
-\Delta u_\varepsilon + V(x) u_\varepsilon \geq 0.
\end{equation*}
Let $\varphi \in \cC_0^\infty(M)$. Since $\supp \varphi$ is compact, we can cover it with a finite number of geodesic balls. Let $B(r)$ be any of such balls and $\varepsilon_0$ the number given in Lemma \ref{lem:pos}. From Proposition \ref{Pr:Harnack}, there exist $C,q > 0$ such that, for every $\varepsilon \in (0,\varepsilon_0]$,
\begin{equation}\label{eq:Harnack}
\inf_{B(r)} u_\varepsilon \geq C \left( \int_{B(2r)} u_\varepsilon^q \, \dV \right)^{1/q}.
\end{equation}
There are two possibilities: either there exists $\overline \varepsilon \in (0,\varepsilon_0]$ such that
\begin{equation*}
\inf_{\varepsilon \in (0,\varepsilon_0]} \int_{B(2r)} u_\varepsilon^q \, \dV = \int_{B(2r)} u_{\overline \varepsilon}^q \, \dV > 0,
\end{equation*}
or, from Fatou's lemma,
\begin{equation*}
\inf_{\varepsilon \in (0,\varepsilon_0]} \int_{B(2r)} u_\varepsilon^q \, \dV = \liminf_{\varepsilon \to 0^+} \int_{B(2r)} u_\varepsilon^q \, \dV \ge \int_{B(2r)} u_0^q \, \dV > 0.
\end{equation*}
Either way,
\begin{equation*}
\inf_{\varepsilon \in (0,\varepsilon_0]} \inf_{B(r)} u_\varepsilon \ge C \inf_{\varepsilon \in (0,\varepsilon_0]} \left( \int_{B(2r)} u_\varepsilon^q \, \dV \right)^{1/q} > 0
\end{equation*}
Since such balls are finitely many,
$$\inf_{\varepsilon \in (0,\varepsilon_0]} \inf_{\supp \varphi} u_\varepsilon =: c > 0.$$
Thus, for every $\varepsilon > 0$ and a.e. $x \in M$,
\begin{equation*}
\frac{\cA(x) u_\varepsilon(x) \varphi(x)}{\bigl( \varepsilon + u_\varepsilon^2(x) \bigr)^{2^*/2+1}} \le \frac{\cA(x) \varphi(x)}{u_\varepsilon^{2^*+1}(x)} \le \frac{\cA(x) \varphi(x)}{c^{2^*+1}},
\end{equation*}
and the dominated convergence theorem yields
\begin{equation*}
\lim_{\varepsilon \to 0^+} \int_{M} \frac{\cA(x) u_\varepsilon \varphi}{(\varepsilon + u_\varepsilon^2)^{2^* / 2 + 1}} \, \dV = \int_{M} \frac{\cA(x) \varphi}{u_0^{2^*+1}} \, \dV.
\end{equation*}
This, \eqref{eq:u_e-sol}, \eqref{eq:B}, and the weak convergence $u_\varepsilon \rightharpoonup u_0$ as $\varepsilon \to 0^+$ conclude the proof.
\end{proof}

\begin{Rem}\label{rem:Linfinity}
If $\cB_- \not = 0$, then $u_\varepsilon$ satisfies
\begin{equation*}
-\Delta u_\varepsilon + \left( V(x) + \cB_-(x) u_\varepsilon^{2^*-2} \right) u_\varepsilon \ge 0.
\end{equation*}
Nevertheless, although $u_\varepsilon \in L^\infty(M)$ (from the classical regularity theory) and $(\|u_\varepsilon\|)_{\varepsilon \in (0,\varepsilon_0]}$ is bounded, a constant $c>0$ such that $|u_\varepsilon|_{L^\infty} \le c \|u_\varepsilon\|$ for every $\varepsilon \in (0,\varepsilon_0]$ may not exist, as $M$ is possibly noncompact and the right-hand side of \eqref{eq:appr} has a Sobolev-critical growth at infinity. For this reason, when $\cB_- \not = 0$, the existence of $C>0$ in \eqref{eq:Harnack} independent of $\varepsilon$ is not guaranteed, nor can we use Proposition \ref{Pr:smp}.
\end{Rem}

\subsection{An alternative to \texorpdfstring{\eqref{eq:ThetaK}}{(\ref{eq:ThetaK})}}\label{ss:alt}

When $\cB \not \ge 0$, $\Theta$ in \eqref{eq:ThetaK} could depend on $\psi$. Additionally, the larger $|\cB_-|_{L^\infty(\supp \psi)}$ is, the smaller $\Theta$ and, consequently, $K$ need to be for \eqref{eq:ThetaK} to hold, giving a narrower range for \eqref{eq:psiK}. For this reason, it is interesting to find an alternative to \eqref{eq:ThetaK}. To this aim, we define $\Phi$ and $\Psi$ in a slightly different way, i.e.,
\begin{equation*}
\Phi(t) = \frac12 t^2 - \frac{1}{2^*} S |\cB|_{L^\infty(\supp \psi)} t^{2^*} \quad \text{and} \quad \Psi(t) = \frac12 t^2 + \frac{1}{2^*} S |\cB|_{L^\infty(\supp \psi)} t^{2^*}.
\end{equation*}
We need a slightly stronger version of \eqref{eq:psiK}, that is,
\begin{equation*}
\|\psi\|^{2^*} \int_{M} \frac{\cA(x)}{|\psi|^{2^*}} \, \dV \leq \frac{K}{(|\cB|_{L^\infty(\supp \psi)} S)^{N-1}}.
\end{equation*}
Then, proceeding in a way similar to what was done earlier, we obtain
\begin{equation*}
I_\varepsilon(t_1 \psi) < \left( \frac{N}{2} \Theta^2 + \frac{N-2}{2} \Theta^{2^*} + \frac{N-2}{2} \frac{K}{\Theta^{2^*}} \right) \Phi(t_0),
\end{equation*}
where $t_0 = (S|\cB|_{L^\infty(\supp \psi)})^{(2-N)/4}$. Considering \eqref{eq:>Phi}, a natural condition to impose is
\begin{equation*}
N \Theta^2 + (N-2) \Theta^{2^*} + (N-2) \frac{K}{\Theta^{2^*}} \le 2.
\end{equation*}

\section{Proof of the main results}\label{Sec:pfmain}

We begin by proving the existence of a finite-energy solution to \eqref{eq:main} under the weaker assumptions (\ref{AssA}) and (\ref{AssB}).
\begin{proof}[Proof of Theorem \ref{thm:main}]
Let $(\cA_n)_n \subset L^1(M) \cap L^{2N/(2+N)}(M)$ and $(\cB_n)_n\subset L^\infty(M)$ such that $\cA_n \ge 0$, $\cA_n \le \cA_{n+1}$, $\cB_{n,+} \le \cB_{n+1,+}$, $\cB_{n,-} \le \cB_{n+1,-}$, where $\cB_{n,+}:=\max\{0,\cB_n\}$ and $ \cB_{n,-}:=\max\{0,-\cB_n\}$, and
\begin{equation*}
\cA_n \to \cA \text{ in } L^1_\textup{loc}(M) \quad \text{and} \quad \cB_n \to \cB \text{ in } L^{2^*}_\textup{loc}(M) 
\end{equation*}
as $n \to +\infty$.\\ 
Observe that, for $\cA_n$ and $\cB_n$, \eqref{eq:psiK} holds for all $n$ and \eqref{eq:B+} holds for every sufficiently large $n$. If $\cB_- = 0$ in $\supp \psi$, then \eqref{eq:ThetaK} does not depend on $\cB_n$ or $\cB$. If, instead, $|\cB_-|_{L^\infty(\supp \psi)} > 0$, then, up to replacing $\cB_{n,-}$ with $(1-\frac1n) \cB_{n,-}$, we can assume that $|\cB_{n,-}|_{L^\infty(\supp \psi)} < |\cB_-|_{L^\infty(\supp \psi)}$. Thus, for every $n$, there exists $k_n$ such that
\begin{equation*}
\frac{|\cB_{n,-}|_{L^\infty(\supp \psi)}}{|\cB_{k_n,+}|_{L^\infty(\supp \psi)}} \le \frac{|\cB_-|_{L^\infty(\supp \psi)}}{|\cB_+|_{L^\infty(\supp \psi)}},
\end{equation*}
and \eqref{eq:ThetaK} holds up to suitably replacing the sequence $(\cB_{n,+},\cB_{n,-})_n$.\\
For every $n$, let $u_n$ be the supersolution to \eqref{eq:main} given by Proposition \ref{prop:main} with $\cA_n$ and $\cB_n$ instead of $\cA$ and $\cB$. We shall prove that $(u_n)_n$ is bounded. Denote
\begin{align}
\Psi_n(t) & = \frac12 t^2 + \frac{1}{2^*} S |\cB_{n,-}|_{L^\infty(\supp \psi)} t^{2^*} \le \frac12 t^2 + \frac{1}{2^*} S |\cB_-|_{L^\infty(\supp \psi)} t^{2^*}, 
\label{Psin}
\\
t_{0,n} & = \frac{1}{(S |\cB_{n,+}|_{L^\infty(\supp \psi)})^{(N-2)/4}} \ge \frac{1}{(S |\cB_+|_{L^\infty(\supp \psi)})^{(N-2)/4}},
\label{t0n}
\end{align}
and
$t_{1,n} = \Theta t_{0,n}$.
Let us recall from Section \ref{S:eps} that each $u_n$ is obtained as the weak limit of a family of functions $(u_{n,\varepsilon})_{\varepsilon}$ as $\varepsilon \to 0^+$ and that each $u_{n,\varepsilon}$ is, in turn, obtained as the weak limit of a Palais--Smale sequence for corresponding functional $I_{n,\varepsilon}$ at the level $m_{n,\varepsilon}$.

From \eqref{eq:bound1}, \eqref{eq:bound2}, Remark \ref{rem:t}, \eqref{Psin}, and \eqref{t0n},
\begin{equation}\label{eq:bdd}
\begin{split}
\frac1N \|u_n\|^2
& \le \frac1N \liminf_{\varepsilon \to 0^+} \|u_{n,\varepsilon}\|^2
\le \liminf_{\varepsilon \to 0^+} m_{n,\varepsilon} \le \Psi_n(t_2) + \frac{1}{2^* t_{1,n}^{2^*}} \int_{M} \frac{\cA(x)}{|\psi|^{2^*}} \, \dV \\
& \le \frac12 t_2^2 + \frac{1}{2^*} S |\cB_-|_{L^\infty(\supp \psi)} t_2^{2^*} + \frac{(S|\cB_+|_{L^\infty(\supp \psi)})^{N/2}}{2^* \Theta^{2^*}} \int_{M} \frac{\cA(x)}{|\psi|^{2^*}} \, \dV. 
\end{split}
\end{equation}
Since $(u_n)_n$ is bounded, there exists $u_0 \in H^1(M)$ such that $u_n \rightharpoonup u_0$ in $H^1(M)$ and $u_n \to u_0$ a.e. in $M$ as $n \to +\infty$ along a subsequence. In particular, $u_0 \ge 0$. Now, let $\varphi \in C_0^\infty(M)$ nonnegative. From the weak convergence, Fatou's lemma, and the fact that $\lim_n |\cB_n \varphi - \cB \varphi|_{2^*} = 0$, we obtain
\begin{align*}
\int_M \nabla u_0 \nabla\varphi + V(x) u_0 \varphi - \cB(x) u_0^{2^*-1} \varphi \, \mathrm{d} v_g
& = \lim_n \int_M \nabla u_n \nabla\varphi + V(x) u_n \varphi - \cB_n(x) u_n^{2^*-1} \varphi \, \mathrm{d} v_g \\
& \ge \lim_n \int_M \frac{\cA_n(x) \varphi}{u_n^{2^*+1}} \, \mathrm{d} v_g = \lim_n \int_{\supp \cA} \frac{\cA_n(x) \varphi}{u_n^{2^*+1}} \, \dV \\
& \ge \int_{\supp \cA} \frac{\cA(x) \varphi}{u_0^{2^*+1}} \, 
= \int_M \frac{\cA(x) \varphi}{u_0^{2^*+1}} \, \mathrm{d} v_g.
\end{align*}
To demonstrate that $u_0$ has finite energy, observe that from Proposition \ref{prop:EX} and \eqref{eq:bdd} there exists $C>0$ such that, for every $n$ and every $\varepsilon \in (0,\varepsilon_0]$,
\begin{align*}
C & \ge I_{n,\varepsilon}(u_{n,\varepsilon}) - \frac12 I_{n,\varepsilon}'(u_{n,\varepsilon})(u_{n,\varepsilon}) \\
& = \frac1N \int_{M} \cB_n(x) u_{n,\varepsilon}^{2^*} \, \dV + \frac{1}{2^*} \int_{M}\frac{\cA_n(x)}{(\varepsilon + u_{n,\varepsilon}^2)^{2^*/2}} \, \dV + \frac12 \int_{M} \frac{\cA_n(x) u_{n,\varepsilon}^2}{(\varepsilon + u_{n,\varepsilon}^2)^{2^*/2+1}} \, \dV \\
& \ge \frac1N \int_{M} \cB_n(x) u_{n,\varepsilon}^{2^*} \, \dV + \frac{1}{2^*} \int_{M}\frac{\cA_n(x)}{(\varepsilon + u_{n,\varepsilon}^2)^{2^*/2}} \, \dV,
\end{align*}
where we renamed $I_\varepsilon$ as $I_{n,\varepsilon}$. Finally, using Fatou's lemma, we first let $\varepsilon \to 0^+$ and then $n \to +\infty$.\\
We can now conclude as in the proof of Proposition \ref{prop:main}.
\end{proof}

Finally, we prove the nonexistence of nonnegative supersolutions, showing that the integrability of $\cA |\psi|^{-2^*}$ is, to some extent, an optimal assumption.

\begin{proof}[Proof of Theorem \ref{th:non}]
Assume by contradiction that $u \in H^1(M)$ as in the statement exists. Observe that, if $\cB \ge 0$ in $\supp u$, or if $\cB_- \in L^\infty(\supp u)$, the Ricci curvature is bounded below, and \eqref{VolumesPositiveNew} holds (using the embedding $H^1(M) \hookrightarrow L^{2^*}(M)$), then we get
\[
0\leq \int_{M} \cB_-(x) u^{2^*} \, \dV <+\infty.
\]
Let $(\varphi_n)_n \subset \cC_0^\infty(M)$ such that $\varphi_n \ge 0$ for every $n$ and $\varphi_n \to u$ in $H^1(M)$ as $n \to +\infty$. Since $u$ is a supersolution to \eqref{eq:main}, Fatou's lemma yields
\begin{align*}
+\infty & > \|u\|^2 + \int_{M} \cB_-(x) u^{2^*} \, \dV
= \lim_n \int_{M} \left[ \nabla u \nabla \varphi_n + V(x) u \varphi_n + \cB_-(x) u^{2^*-1} \varphi_n \right] \, \dV \\
& \ge \liminf_n \int_{M} \frac{\cA(x) \varphi_n}{u^{2^*+1}} \, \dV  \ge \int_{M} \frac{\cA(x)}{u^{2^*}} \, \dV,
\end{align*}
which is a contradiction.
\end{proof}

\section*{Acknowledgements}
Bartosz Bieganowski is supported by the National Science Centre, Poland (Grant no. 2022/47/D/\ ST1/00487). 
Pietro d'Avenia and Jacopo Schino are members of GNAMPA (INdAM) and were supported by GNAMPA-INdAM Project 2026 (CUP E53C25002010001).
Pietro d'Avenia is also supported by the Italian Ministry of University and Research under the Program Department of Excellence L. 232/2016 (CUP D93C23000100001). This paper has been completed during visits to the Politecnico di Bari and the University of Warsaw. The authors thank both institutions for the warm hospitality.

\section*{Competing interests} The authors have no competing interests to declare that are relevant to the content of this article.

\section*{Data availability} Not applicable.

\bibliography{bibliography}
\bibliographystyle{abbrv}

\end{document}